\theoremstyle{plain} 
\newtheorem{thm}{Theorem}
\newtheorem{prop}{Proposition}
\newtheorem{lem}{Lemma}
\theoremstyle{definition}
\theoremstyle{remark}
\newtheorem{remark}{Remark}
\newtheorem*{toy}{Toy Example}
\newtheorem*{condS1}{Condition S1}
\newtheorem*{condS2}{Condition S2}
\newtheorem*{condLP1}{Condition LP1}
\newtheorem*{condGP1}{Condition GP1}
\newtheorem*{condLP2}{Condition LP2}
\newtheorem*{condGP2}{Condition GP2}
\newcommand{\RR}{\mathbb{R}}
\renewcommand{\L}{\mathcal{L}}
\newcommand{\E}{\mathsf{E}}
\newcommand{\prob}{\mathsf{P}}
\newcommand{\eps}{\varepsilon}
\newcommand{\nm}{\mathsf{N}}
\newcommand{\dir}{\mathsf{Dir}}
\newcommand{\unif}{\mathsf{Unif}}
\newcommand{\Xbar}{\bar{X}}
\renewcommand{\S}{\mathcal{S}}
\title{\Huge Data-driven priors and their posterior concentration rates}
\author{
Ryan Martin\footnote{Department of Statistics, North Carolina State University, {\tt rgmarti3@ncsu.edu}} \quad and \quad 
Stephen G.~Walker\footnote{Department of Mathematics, University of Texas at Austin, {\tt s.g.walker@math.utexas.edu}}
}
\date{\today}
\begin{document}

\maketitle 


\begin{abstract}
In high-dimensional problems, choosing a prior distribution such that the corresponding posterior has desirable practical and theoretical properties can be challenging.  This begs the question: can the data be used to help choose a good prior?  In this paper, we develop a general strategy for constructing a data-driven or {\em empirical prior} and sufficient conditions for the corresponding posterior distribution to achieve a certain concentration rate.  The idea is that the prior should put sufficient mass on parameter values for which the likelihood is large.  An interesting byproduct of this data-driven centering is that the asymptotic properties of the posterior are less sensitive to the prior shape which, in turn, allows users to work with priors of computationally convenient forms while maintaining the desired rates. General results on both adaptive and non-adaptive rates based on empirical priors are presented, along with illustrations in density estimation, nonparametric regression, and high-dimensional structured normal models.

\smallskip

\emph{Keywords and phrases:} Adaptation; data-dependent prior; density estimation; empirical Bayes; high-dimensional inference.
\end{abstract}

\section{Introduction}
\label{S:intro}


The Bayesian framework is ideally suited for updating prior beliefs.  However, applications often do not come equipped with genuine prior beliefs, so the data analyst must make a choice.  For low-dimensional problems, the posterior is relatively insensitive to the choice of prior, at least asymptotically, so default non-informative priors can be used.  For modern high-dimensional problems, on the other hand, the prior matters, and the present way of thinking is to choose a prior such that the corresponding posterior distribution has certain desirable properties.  For example, in sparse high-dimensional normal linear models, conjugate normal priors are attractive due to their computational simplicity.  However, it was shown in \citet[][Theorem~2.8]{castillo.vaart.2012} that, for priors with thin normal tails, the posterior has certain suboptimal asymptotic properties, so these are out and more sophisticated priors like the horseshoe \citep{carvalho.polson.scott.2010} and its variants \citep[e.g.,][]{armagan.etal.2013, dunson.shrinkage, bhadra.hsplus.2017} are now in.  The point is that, at least in high-dimensional problems, the interpretation of prior distributions has changed---their role is simply to facilitate efficient posterior inference and, therefore, only priors whose corresponding posterior has good properties are used.  So if an {\em empirical} or {\em data-dependent} prior had some practical or theoretical benefit, then there would be no reason not to use it.  This begs the two-part question: are there any benefits to the use of an empirical prior and, if so, how to construct one for which these benefits are realized?  

The idea of letting the prior depend on data is not new.  Classical empirical Bayes, as described in \citet[][Ch.~4.5]{berger1985}, \citet{carlin.louis.1996}, and more recently in \citet{efron2010book}, leaves certain prior hyperparameters unspecified and then uses the data to construct plug-in estimates of these parameters, usually via marginal maximum likelihood.  That is, if $\theta$ is the parameter of interest, then a class $\{Q_\gamma: \gamma \in \Gamma\}$ of prior distributions for $\theta$ is considered, and rather than introducing another prior for $\gamma$, one simply gets an estimator, $\hat\gamma$, based on data, and uses the plug-in prior $Q_{\hat\gamma}$.  The primary motivation for such a strategy is to let the data help carry some of the data analyst's prior specification burden.  This, in turn, can provide some computational benefits, since the posterior for $\gamma$ does not need to be evaluated.  These computational savings are usually minimal in the high-dimensional settings we have in mind here, since $\gamma$ is usually of very low dimension compared to the interest parameter $\theta$.  Posterior distribution properties for these classical empirical Bayes strategies have been investigated recently in, e.g., \citet{szabo.vaart.zanten.2013} and \citet{pas.szabo.vaart.uq, pas.szabo.vaart.rate} for a high-dimensional Gaussian model, and more generally in \citet{rousseau.etal.eb}.  These results confirm a natural conjecture that the use of the data-dependent prior $Q_{\hat\gamma}$ is asymptotically equivalent to the use of data-independent prior $Q_{\gamma^\star}$, where $\gamma^\star$ an appropriately defined ``best'' value.  But they do not reveal any theoretical benefit to the use of a data-dependent prior, it only says the performance is no worse than it would be with a special data-independent prior $Q_{\gamma^\star}$.  What is missing from the classical approach is a direct use of the information the data contains about $\theta$ itself; it only uses information indirectly through a marginal likelihood that is of little relevance to the actual problem.  

Fortunately, there are other strategies for constructing empirical priors.  \citet{martin.walker.eb} and \citet{martin.mess.walker.eb} recently employed a new type of empirical Bayes procedure, in two structured high-dimensional Gaussian linear models; related approaches to these problems can be found in \citet{belitser.ddm}, \citet{belitser.nurushev.uq}, \citet{belitser.ghosal.ebuq}, and \citet{ariascastro.lounici.2014}.  Their main idea was to suitably center the prior for $\theta$ around a good estimator, and they were able to establish various optimal posterior concentration rate and structure learning results.  An important practical consequence of their approach is that the computationally convenient conjugate normal priors, shown to be suboptimal in the classical Bayesian setting, do actually meet the conditions for optimality in this new empirical Bayes context.  The practical and theoretical benefits in these cases have been refined and extended in \citet{ebpiece}, \citet{ebcvg}, and \citet{ebpred}; see, also, \citet{lee.lee.lin.deb}.  However, their empirical prior construction and the posterior concentration results rely heavily on the Gaussian linear model structure, so whether there is a general framework underlying these developments remains an open question.  Our main contribution here is to give an affirmative answer to this question, by presenting a general empirical prior construction and establishing general posterior concentration rate results.  

To set the scene, let $X^n$ be the data, indexed by $n \geq 1$, not necessarily independent and identically distributed (iid) or even independent, with joint distribution $\prob_\theta^n$ with density $p_\theta^n$ indexed by a parameter $\theta$ in $\Theta$, possibly high- or infinite-dimensional.  For a sequence of prior distributions, $\Pi_n$, on $\Theta$, the posterior distribution, $\Pi^n$, for $\theta$ is defined, according to Bayes's formula, as 
\[ \Pi^n(A) = \frac{\int_A L_n(\theta) \, \Pi_n(d\theta)}{\int_{\Theta_n} L_n(\theta) \, \Pi_n(d\theta)}, \quad A \subseteq \Theta_n, \]
where $L_n(\theta) = p_\theta^n(X^n)$ is the likelihood function.  A relevant property of the posterior $\Pi^n$ is its concentration rate relative to the Hellinger distance on the set of joint densities $\{p_\theta^n: \theta \in \Theta\}$.  Recall that the Hellinger distance between two densities, say, $f$ and $g$, with dominating measure $\mu$, is given by $H^2(f,g) = \tfrac12 \int (f^{1/2} - g^{1/2})^2 \,d\mu$.  If $\eps_n$ is a sequence with $\eps_n \to 0$ no faster than $n^{-1/2}$, then we say that the posterior distribution has (Hellinger) concentration rate (at least) $\eps_n$ at $\theta^\star$ if $\E_{\theta^\star}^n \{\Pi^n(A_{M\eps_n})\} \to 0$ as $n \to \infty$, where 
\[ A_{M\eps_n} = \bigl\{\theta: H^2(p_{\theta^\star}^n, p_\theta^n) > 1-e^{-M^2 n\eps_n^2}\bigr\} \]
and $M > 0$ is a sufficiently large constant.  Here $\E_{\theta^\star}^n$ denotes expectation with respect to the joint distribution $\prob_{\theta^\star}^n$.  For a deterministic or data-independent sequence of priors, $\Pi_n$, this property has been investigated in \citet{ggv2000} and \citet{walker2007} for the iid case and by \citet{ghosalvaart2007} in the non-iid case.  Here we investigate this property for certain data-dependent priors.  

To motivate our specific empirical prior construction, recall an essential part of the posterior concentration rate proofs for standard Bayesian posteriors.  If $\eps_n$ is the desired rate, then it is typical to consider a ``neighborhood'' of the true $\theta^\star$ of the form 
\begin{equation}
\label{realnhood}
\bigl\{\theta: K(p_{\theta^\star}^n, p_\theta^n) \leq n \eps_n^2, \, V(p_{\theta^\star}^n, p_\theta^n) \leq n \eps_n^2 \bigr\}, 
\end{equation}
where $K$ is the Kullback--Leibler divergence and $V$ is the corresponding second moment,
\[ K(f,g) = \int \log(f/g) \, f \, d\mu \quad \text{and} \quad V(f,g) = \int \log^2(f/g) \, f \, d\mu. \]
A crucial step in proving that the posterior attains the $\eps_n$ rate is to demonstrate that the prior allocates a sufficient amount of mass to the set in \eqref{realnhood}.  If the prior could be suitably centered at $\theta^\star$, then this prior concentration would be trivial.  The difficulty, of course, is that $\theta^\star$ is unknown, so care is needed to construct a prior satisfying this prior concentration property simultaneously for a sufficiently wide range of $\theta^\star$.  In fact, this placement of prior mass can be problematic and is one reason why examples like monotone density estimation are challenging; see \citet{salomond2014}. 

Our proposed alternative is motivated by considering an ``empirical version'' of the neighborhood in \eqref{realnhood}, namely, 
\[\left\{\theta: \int\log \frac{p_{\hat{\theta}_n}(x)}{p_{\theta}(x)}\, \mathbb{P}_n(dx) \leq n\,\eps_n^2\right\},\]
where $\hat{\theta}_n$ is a suitable estimator and $\mathbb{P}_n$ is the empirical distribution function. We do not need a term corresponding to the second moment, $V$. This is equivalent to 
\[ \L_n = \{\theta: L_n(\theta) \geq e^{-n \eps_n^2} L_n(\hat\theta_n)\}. \]
This is effectively a neighborhood of $\hat\theta_n$, which is known, unlike the $\theta^\star$ in (\ref{realnhood}), so it is straightforward to construct a prior to assign a sufficient amount of mass to $\L_n$.  The consequence is that a prior satisfying this mass condition would depend on the data, since it must be suitably centered at $\hat\theta_n$.  But aside from the data-dependent centering and some care in its spread (see Remark~\ref{re:variance}), the specific shape of the empirical prior distribution satisfying this property is not particularly important.  Therefore, the conditions can be checked with relatively simple---often conjugate---priors, which greatly simplifies posterior computations.  Moreover, the method in general is quite versatile, providing simple solutions with optimal concentration rates in challenging problems like monotone \citep{ebmono} and heavy-tailed density estimation (Section~\ref{SS:mixture1}), and other shape-constrained problems \citep{ebpiece}, while giving improved rates in a classical nonparametric regression problem (Section~\ref{SS:np.reg}).  



The discussion above focused on cases where the target rate $\eps_n$ was known, which can be unrealistic in high-dimensional problems.  For example, in a nonparametric regression problem, the optimal rate will depend on the smoothness of the true mean function.  If this smoothness is known, then it is possible to tune the prior so that the attainable and targeted rates agree.  However, if the smoothness is unknown, as is often the case, the prior cannot make direct use of this information, so one needs to make the prior more flexible so that it can adapt to the unknown rate.  Adaptive posterior concentration rate results have received considerable attention in the recent literature, see \citet{vaart.zanten.2009}, \citet{kruijer.rousseau.vaart.2010}, \citet{arbel.etal.sjs2013}, \citet{scricciolo2015}, and \citet{shen.ghosal.2015}.  The common feature in all this work is that the prior should be a mixture over an appropriate model complexity index.  The empirical prior approach described above can readily handle this modification, and we provide general sufficient conditions for adaptive empirical Bayes posterior concentration.  

The remainder of this paper is organized as follows.  In Section~\ref{S:eb}, we introduce the notion of an empirical prior and present the conditions needed for the corresponding posterior distribution to concentrate at the true parameter value at a particular rate.  This discussion is split into two parts, depending on whether the target rate is known or unknown.  Section~\ref{S:proofs} presents the proofs of the two main theorems, and a take-away point is that the arguments are quite straightforward, suggesting that the particular empirical prior construction is indeed very natural.  Several examples are presented in Section~\ref{S:examples}, starting from a relatively simple parametric problem and ending with a challenging adaptive nonparametric density estimation problem.  We conclude, in Section~\ref{S:discuss}, with a brief discussion.  Details for the examples are in the Appendix.


\section{Empirical priors and posterior concentration}
\label{S:eb}

\subsection{Known complexity}
\label{SS:known}

For our first case, suppose the complexity of $\theta^\star$, e.g., the smoothness of the true density or regression function, is known.  Then we know the target rate, $\eps_n$, and we can make use of this information to design an appropriate sieve on which to construct an empirical prior.  For this case, below we present a set of sufficient conditions that imply the posterior corresponding to our empirical prior has Hellinger concentration rate $\eps_n$.  Applications of this result will be given in Section~\ref{S:examples}.    

Our prior construction here and in the next subsection relies on a sieve, $\Theta_n$, an increasing sequence of finite-dimensional subsets of the parameter space $\Theta$.  Let $\hat\theta_n = \arg\max_{\theta \in \Theta_n} L_n(\theta)$ be a sieve maximum likelihood estimator (MLE).  As is always the case, what distinguishes a sieve from some other subset of the parameter space is its approximation properties.  Condition~S1 below states specifically what will be required.  

\begin{condS1}
Given $\eps_n$, there exists a deterministic sequence $\theta^\dagger = \theta_n^\dagger$ in $\Theta_n$ such that 
\[ \max\bigl\{ K(p_{\theta^\star}^n, p_{\theta^\dagger}^n), V(p_{\theta^\star}^n, p_{\theta^\dagger}^n) \bigr\} \leq n \eps_n^2, \quad \text{all large $n$}. \]
\end{condS1}

\begin{remark}
\label{re:sieve}
The sequence $\theta^\dagger=\theta_n^\dagger$ in Condition~S1 can be interpreted as ``pseudo-true'' parameter values in the sense that $n^{-1} K(p_{\theta^\star}^n, p_{\theta^\dagger}^n) \to 0$.  In the case that $\Theta_n$ eventually contains $\theta^\star$, then we can trivially take $\theta^\dagger = \theta^\star$.  However, in examples like that in Section~\ref{SS:np.reg}, the model does not include the true distribution, so identifying $\theta^\dagger$ is more challenging.  Fortunately, appropriate sieves are already known in many of the key examples.  
\end{remark}

\begin{remark}
\label{re:condR1}
An important consequence of Condition~S1 is a bound on the likelihood ratio $R_n(\hat\theta_n)$ at the sieve MLE, which will be used in the proofs of our main theorems.  In particular, there exists a constant $c > 1$ such that 
\begin{equation}
\label{eq:condR1}
R_n(\hat\theta_n) \geq e^{-c n \eps_n^2} \quad \text{with $\prob_{\theta^\star}^n$-probability converging to 1}.
\end{equation}  
Indeed, for $\theta^\dagger$ in Condition~S1, by definition of $\hat\theta_n$, we trivially have $R_n(\hat\theta_n) \geq R_n(\theta^\dagger)$, and for the iid~case it follows from Lemma~8.1 in \citet{ggv2000}---with their ``$\Pi$'' a point mass at $\theta^\dagger$---that $R_n(\theta^\dagger) \geq e^{-c n \eps_n^2}$ with $\prob_{\theta^\star}^n$-probability converging to 1.  The general case is handled in Lemma~10 of \citet{ghosalvaart2007a}.
\end{remark}

The sieve $\Theta_n$ will also serve as the support of our yet-to-be-defined empirical prior $\Pi_n$.  Since it is finite-dimensional, we will assume that it is equipped with a {\em data-independent} measure $\nu_n$, e.g., Lebesgue measure, and $\Pi_n$ will have a density $\pi_n$ with respect to $\nu_n$.  The reason the measure must be data-independent is that it rules out the case of a degenerate prior supported at $\hat\theta_n$, a situation we are not interested in investigating.  

The next two conditions---LP1 and GP1---concern the prior supported on $\Theta_n$.  The first, a local prior condition, formally describes how the empirical prior $\Pi_n$ should concentrate on that empirical version of the Kullback--Leibler neighborhood \eqref{realnhood} eluded to in Section~\ref{S:intro}, namely, 
\begin{equation}
\label{eq:Ln} 
\L_n = \bigl\{\theta \in \Theta_n: L_n(\theta) \geq e^{-d n \eps_n^2} L_n(\hat\theta_n)\bigr\}, \quad \text{some $d > 0$}.  
\end{equation}
On one hand, requiring that a sufficient amount of mass be assigned to $\L_n$ is similar to the standard local prior support conditions in \citet{ggv2000}, \citet{shen.wasserman.2001}, and \citet{walker2007}, inspired by the developments in \citet{barron1988}.  On the other hand, the neighborhood's dependence on the data is our chief novelty and the main driver of our empirical prior construction.  

\begin{condLP1}
Given $\eps_n$, there exists $C > 0$ such that the prior $\Pi_n$ satisfies 
\begin{equation}
\label{eq:prior}
\prob_{\theta^\star}^n\{\Pi_n(\L_n) < e^{-Cn\eps_n^2}\} \to 0, \quad n \to \infty,
\end{equation}
where $\L_n$ is as in \eqref{eq:Ln}, depending implicitly on $\eps_n$.  
\end{condLP1}

\begin{remark}
\label{re:variance}
LP1 often requires the spread of $\Pi_n$ to be decreasing with $n$.  For example, in a scalar normal mean problem, to satisfy LP1 with $\eps_n = n^{-1/2}$ requires, say, a normal empirical prior, centered at the sample mean, with variance $v_n = v n^{-1}$ for some $v > 0$.  Of course, LP1 is a sufficient but not necessary condition, so it is possible, at least in simple cases like this, to get the desired posterior concentration rate with other priors, e.g., with constant $v_n$.  We are conditioned to believe that a tight prior is undesirable because it might be overly informative, but this rationale is based on the prior center being fixed.  In the present case, the prior gets its ``non-informativeness'' from the data-driven center.  And from this perspective, relatively tight prior concentration is actually quite reasonable, since one cannot expect a real benefit from the prior centering without putting a substantial amount of prior mass there.  Finally, when $n$ is fixed, the empirical prior spread involves constants, e.g., $v$ in $v_n$ above, that can be chosen by the data analyst, so there is no flexibility lost in practice.   
\end{remark}


The second prior condition is global and effectively controls the tails of the empirical prior density $\pi_n$, i.e., how heavy can the tails be and still achieve the desired rate.  This is an empirical prior version of the more familiar prior tail condition \citep{ggv2000} or the prior summability condition \citep{walker2007} in the classical Bayesian nonparametric setting.  

\begin{condGP1}
Given $\eps_n$, there exists constants $K > 0$ and $p > 1$, such that the density function $\pi_n$ of the empirical prior $\Pi_n$ satisfies 
\[ \int_{\Theta_n} \bigl[ \E_{\theta^\star}^n\{\pi_n(\theta)^p\} \bigr]^{1/p} \, \nu_n(d\theta) \leq e^{K n \eps_n^2}. \]
\end{condGP1}

Condition~GP1 points to $\pi_n$ not having too heavy tails, but in a distributional sense, taking into account its dependence on the data.  While it might be unfamiliar, our examples in Section~\ref{S:examples} show that it can be verified for commonly used priors, e.g., normal priors, centered at $\hat{\theta}$, with suitable variance, and any $p > 1$.

With the empirical prior $\Pi_n$ on $\Theta_n$, having density $\pi_n$ with respect to $\nu_n$, the posterior distribution is defined as
\begin{equation}
\label{eq:post}
\Pi^n(A) = \frac{\int_A L_n(\theta) \, \pi_n(\theta) \,\nu_n(d\theta)}{\int_{\Theta_n} L_n(\theta) \, \pi_n(\theta) \, \nu_n(d\theta)}, \quad A \subseteq \Theta_n. 
\end{equation}
Then the following theorem considered the asymptotic behavior of the random variable $\Pi^n(A_{M\eps_n})$, where $A_{M\eps_n}$ is the Hellinger neighborhood described in Section~\ref{S:intro}.  While this Hellinger neighborhood is relatively specific, the result entails rates with respect to other metrics in the examples of Section~\ref{S:examples}.  And, for example, in the usual iid case, if the posterior mass assigned to $A_{M\eps_n}$ vanishes, then so does that of 
\[ \{\theta: H(p_{\theta^\star}, p_\theta) > M \eps_n\}, \]
in which case $\eps_n$ is the usual Hellinger rate.  
  
\begin{thm}
\label{thm:rate1}
Let $\eps_n$ be such that $\eps_n \to 0$ and $n \eps_n^2 \to \infty$.  If, for this $\eps_n$, Condition~S1 holds and the empirical prior satisfies LP1 and GP1, then there exists a constant $M > 0$ such that $\E_{\theta^\star}^n\{\Pi^n(A_{M\eps_n})\} \to 0$ as $n \to \infty$.  If $\eps_n = O(n^{-1/2})$, then the same conclusion holds but with the constant $M$ replace by an arbitrary sequence $M_n \to \infty$. 
\end{thm}

\begin{proof}
See Section~\ref{S:proofs}.
\end{proof}


\subsection{Unknown complexity}
\label{SS:unknown}

As discussed above, the attainable concentration rates depend on certain complexity features of the unknown $\theta^\star$, e.g., smoothness of a regression function.  If that feature is known, as in Section~\ref{SS:known}, then so is the desired rate, $\eps_n$, and that information can be used to construct a suitable sieve on which to define a prior, empirical or otherwise.  When that feature is unknown, the standard practice \citep[e.g.,][Chap.~10]{ghosal.vaart.book} is to work with a prior that mixes over models of different complexity levels, and leads to a posterior that adapts to the ``right'' complexity for the unknown $\theta^\star$.  Here we adopt that same mixture strategy, but with an empirical twist.  

Start with a representation of $\theta$ as a pair $(S, \theta_S)$, where $S$ is some model index, taking values in some finite set $\S_n$, and $\theta_S$ is the corresponding model parameter, taking values in $\Theta_{n,S}$.  This suggests a sieve 
\[ \Theta_n = \bigcup_{S \in \S_n} \Theta_{n,S}. \]
The particular form of this decomposition can vary across applications.  One that is common is to represent a log-density or regression function in terms of a basis expansion and let $\theta=(\theta_1,\theta_2,\ldots)$ denote the coefficients.  Then $S$ could correspond to a finite set of indices that are ``turned on,'' 
\[ \Theta_{n,S} = \{\theta = (\theta_1,\theta_2,\ldots): \theta_j = 0, \, j \not\in S\}, \]
and $\S_n$ a collection of subsets of $\{1,2,\ldots,\}$ whose cardinality is bounded by some specified $T_n$.  This version of $S$ is used in Sections~\ref{SS:mean} and \ref{SS:np.reg} for a sparse normal means model and nonparametric regression, respectively.  In mixture models, on the other hand, $S$ would be an integer the represents the number of mixture components.  An important feature of $S$ or of $\Theta_{n,S}$ is its dimension, which we will denote by $|S|$; in our examples, each $\Theta_{n,S}$ will be finite-dimensional and $|S|$ is literally its dimension, but this could also apply to infinite-dimensional $\Theta_{n,S}$ with $|S|$ a suitable entropy of $\Theta_{n,S}$.  The key point is that $|S|$ measures the complexity of model $S$, both intuitively and in the technical sense that a more complex $S$, one with larger $|S|$, will have a slower associated rate.  

Here, compared to Section~\ref{SS:known}, we do not know the complexity of the true $\theta^\star$ or, more specifically, we do not know which $\Theta_{n,S}$, if any, contains $\theta^\star$.  If there happens to exist a true model $S^\star$, so that $\theta^\star \in \Theta_{n,S^\star}$, then the rate we would hope to achieve is $\eps_n = \eps_{n,S^\star}$, in which case we say that the posterior concentration rate is {\em adaptive}.  More generally, if there exists a ``best'' model $S^\dagger$---see Condition~S2 below---then adaptation entails that the posterior concentrates at the associated {\em oracle rate} $\eps_n = \eps_{n,S^\dagger}$.   

The driving assumption behind recent developments in high-dimensional inference is that the truth is not too complex, and we can incorporate such a belief into our prior for $S$.  Towards this, start with a marginal prior $w_n$ for $S$, supported on $\S_n$, and a conditional prior $\Pi_{n,S}$ for $\theta_S$, given $S$, supported on $\Theta_{n,S}$.  Since $\Theta_{n,S}$ is finite-dimensional, there is some non-data-dependent measure, $\nu_{n,S}$, such as Lebesgue measure, with respect to which $\Pi_{n,S}$ has a density, $\pi_{n,S}$.  Then the prior distribution $\Pi_n$ on $\Theta_n$ is a mixture 
\begin{equation}
\label{eq:deprior1}
\Pi_n(A) = \sum_{S \in \S_n} w_n(S) \, \Pi_{n,S}(A \cap \Theta_{n,S}), \quad A \subseteq \Theta_n,  
\end{equation}
where $\Pi_{n,S}(B) = \int_B \pi_{n,S}(\theta) \, \nu_{n,S}(d\theta)$.  In practice, we often have prior information in the form of a ``low-complexity assumption,'' i.e., small $w_n(S)$ for complex $S$, but we can be non-informative about $\theta_S$, as before, by letting data control its prior center.  

As before, various conditions are needed in order to prove that the posterior concentrates at a certain rate.  Again, these come in the form of a condition on the sieve and local and global conditions on the prior.  In this case, the complexity is unknown and we seek a more general adaptive concentration result so, naturally, the conditions here are more complicated than in Section~\ref{SS:known}.  

\begin{condS2}
Given $\eps_n$, there exists $S^\dagger = S_n^\dagger$ in $\S_n$, with $|S^\dagger| \leq n\eps_n^2$, and an associated $\theta^\dagger = \theta_n^\dagger$ in $\Theta_{n,S^\dagger}$ such that 
\[ \max\bigl\{ K(p_{\theta^\star}^n, p_{\theta^\dagger}^n), V(p_{\theta^\star}^n, p_{\theta^\dagger}^n) \bigr\} \leq n \eps_n^2, \quad \text{all large $n$}. \]
\end{condS2}

Recall that the complexity of the model, as measured by $|S|$, and the quality of approximation are at odds with one another, i.e., a simple model with small $|S|$ will tend to have large Kullback--Leibler approximation error and vice versa.  The smallest $\eps_n$ for which Condition~S2 holds will be called the {\em oracle rate}.  In some examples, it is known that $\theta^\star$ belongs to $\Theta_{n,S^\star}$ for some $S^\star$, in which case we can take $S^\dagger=S^\star$ and $\theta^\dagger = \theta^\star$ so that Condition~S2 is trivial and the corresponding oracle rate is simply the rate $\eps_{n,S^\star}$ associated with the true parameter space.  In cases where $\theta^\star$ does not belong to any sieve, approximation-theoretic results are needed to check Condition~S2.  Examples of both types are presented in Section~\ref{S:examples}.  Regardless, $S^\dagger$ acts like the ``pseudo-true'' model, $\theta^\dagger$ a deterministic sequence of ``pseudo-true'' parameters, and $\eps_n = \eps_{n,S^\dagger}$ is the oracle rate; see Remark~\ref{re:sieve}.  Moreover, like in Remark~\ref{re:condR1}, Condition~S2 implies a bound on the likelihood ratio, i.e., 
\begin{equation}
\label{eq:condR2}
\prob_{\theta^\star}^n\{R_n(\hat\theta_{n, S^\dagger}) < e^{-c n \eps_n^2}\} \to 0, 
\end{equation}
where $\hat\theta_{n,S}$ denotes the sieve MLE over $\Theta_{n,S}$, $S \in \S_n$.  

Next, similar to what we did in Section~\ref{SS:known}, let us define the sets 
\[ \L_{n,S} = \bigl\{\theta \in \Theta_{n,S}: L_n(\theta) \geq e^{-d|S|} L_n(\hat\theta_{n,S})\bigr\}, \quad S \in \S_n, \quad d > 0, \]
which are just neighborhoods of $\hat\theta_{n,S}$ in $\Theta_{n,S}$.  Then we have the following versions of the local and global prior conditions, suitable for the adaptive case, which dictate how the prior $\Pi_{n,S}$ allocates mass to $\L_{n,S}$ and $\Theta_{n,S} \cap \L_{n,S}^c$, respectively.  

\begin{condLP2}
Given $\eps_n$ and the pseudo-true model $S^\dagger$ from Condition~S2, there exist constants $A > 0$ and $C > 0$ such that, as $n \to \infty$, 
\[ \prob_{\theta^\star}^n\{ \Pi_{n,S^\dagger}(\L_{n,S^\dagger}) > e^{-C n \eps_n^2}\} \to 0, \quad \text{and} \quad w_n(S^\dagger) \gtrsim e^{-A n \eps_n^2}. \]
\end{condLP2}

\begin{condGP2} 
Given $\eps_n$, there exists constants $K \geq 0$ and $p > 1$ such that 
\begin{equation}
\label{eq:gp2.sum}
\sum_{S \in \S_n} w_n(S) \int_{\Theta_{n,S}} \bigl[ \E_{\theta^\star}^n \{\pi_{n,S}(\theta)^p\} \bigr]^{1/p} \, \nu_{n,S}(d\theta) \lesssim e^{K n\eps_n^2}, \quad \text{all large $n$}. 
\end{equation}
\end{condGP2}

In certain examples, such as those in Sections~\ref{SS:mean}--\ref{SS:np.reg}, it can be shown that the integral in Condition~GP2 above is bounded by $e^{\kappa |S|}$ for some constant $\kappa$.  Then the condition is satisfied with $K=0$ if the prior $w_n$ for $S$ is such that the marginal prior for $|S|$ has exponential tails \citep[e.g.,][]{arbel.etal.sjs2013, shen.ghosal.2015}.   

For adaptive concentration rates, some extra regularization is needed in addition to the prior centering.  This additional regularization amounts to a second way in which the prior depends on the data, so we refer to these as {\em double empirical priors}, and below we will consider two types of regularization.  

\begin{itemize}
\item {\em Type~1 Regularization}. For an $\alpha \in (0,1)$ to be specified, if $\Pi_n$ is the empirical prior above, then we set the double empirical prior as 
\begin{equation}
\label{eq:deprior.type1}
\widetilde \Pi_n(d\theta) \propto \frac{\Pi_n(d\theta)}{L_n(\theta)^{1-\alpha}}. 
\end{equation}
Dividing by a portion of the likelihood penalizes those parameters that ``track the data too closely'' \citep{walker.hjort.2001}, hence regularization.  A range of acceptable $\alpha$ values is identified below.  In fact, $\alpha$ can often be arbitrarily close to 1, so this is indeed a very minor adjustment.
\ifthenelse{1=1}{}{
Then the corresponding double empirical Bayes posterior has two equivalent forms:
\[ \Pi^n(d\theta) \propto L_n(\theta) \, \widetilde \Pi_n(d\theta) \quad \text{or} \quad \Pi^n(d\theta) \propto L_n(\theta)^\alpha \, \Pi_n(d\theta). \]
Of the two expressions, the former is more intuitive from an ``empirical Bayes'' perspective, while the latter resembles some recent uses of a power likelihood in, e.g., \citet{grunwald.ommen.scaling}, \citet{miller.dunson.power}, \citet{bissiri.holmes.walker.2016}, \citet{holmes.walker.scaling}, \citet{syring.martin.scaling}, and others.  
A range of acceptable $\alpha$ values is identified below.  In fact, $\alpha$ can often be arbitrarily close to 1, so this is indeed a very minor adjustment.
}  
\item {\em Type~2 Regularization}. Even though the regularization step in the above construction is very mild, some readers might be uncomfortable with what can be viewed as even a minor adjustment to the likelihood.  An alternative approach is to place the additional regularization on the prior $w_n$ for $S$.  That is, if $w_n$ is as above, then for an $\alpha \in (0,1)$ to be specified, define 
\begin{equation}
\label{eq:wtilde}
\widetilde w_n(S) \propto \frac{w_n(S)}{L_n(\hat\theta_{n,S})^{1-\alpha}}, \quad S \in \S_n. 
\end{equation}
This has the effect of putting an even smaller weight on those models that fit the data ``too well'' in the sense that their maximum likelihood is large, hence regularization.  But, as above, often any $\alpha < 1$ is allowed, so this extra regularization is quite mild.  This amounts to a double empirical prior of the form  
\begin{equation}
\label{eq:deprior.type2}
\widetilde\Pi_n(A) = \sum_{S \in \S_n} \widetilde w_n(S) \, \Pi_{n,S}(A \cap \Theta_{n,S}), \quad A \subseteq \Theta_n. 
\end{equation}
\end{itemize}

In either case, for a suitable (and implicit) $\alpha$ to be defined below, the posterior distribution based on the double empirical prior can be expressed as 
\begin{equation}
\label{eq:depost}
\Pi^n(A) = \frac{\int_A R_n(\theta) \, \widetilde\Pi_n(d\theta)}{\int_{\Theta_n} R_n(\theta) \, \widetilde\Pi_n(d\theta)}, \quad A \subseteq \Theta_n. 
\end{equation} 

\begin{thm}
\label{thm:rate2}
Let $\eps_n$ be such that $\eps_n \to 0$ and $n\eps_n^2 \to \infty$, and assume that Conditions~S2, LP2, and GP2 hold for this $\eps_n$.  For the constant $p > 1$ in Condition~GP2, take any 
\[ \alpha \in (0, 1 - p^{-1}). \]
Then there exists $M > 0$ such that $\Pi^n$ in \eqref{eq:depost}, whether it be based on Type~I or Type~II regularization, satisfies $\E_{\theta^\star}^n\{\Pi^n(A_{M\eps_n})\} \to 0$ as $n \to \infty$.   
\end{thm}

\begin{proof}
See Section~\ref{S:proofs}.
\end{proof}

We automatically have the Condition~S2 holds for any $\eps_n$ larger than the oracle rate, and since Condition~LP2 depends specifically on the pseudo-true model $S^\dagger$, it can typically be shown that it too holds for the oracle rate.  So as long as Condition~GP2 also holds for the oracle rate, we get the advertised adaptation property.  Otherwise, the rate is the larger of the oracle rate in Conditions~S2 and LP2 and that which satisfies Condition~GP2.  Moreover, if the integral in \eqref{eq:gp2.sum} is exponential in the dimension $|S|$, then Condition~GP2 can be well-controlled with weights $w_n(S)$ that are exponentially small in $|S|$.  Finally, note that Condition~GP2 can often be verified for any $p > 1$; see the examples in Section~\ref{S:examples} and the results in \citet{martin.mess.walker.eb}, \citet{ebpiece}, etc.  In such cases, any $\alpha < 1$ is allowed in either Type~I or Type~II regularization.

\section{Proofs}
\label{S:proofs}

\subsection{Proof of Theorem~\ref{thm:rate1}}

Start by expressing the posterior $\Pi^n$ in \eqref{eq:post} as 
\begin{equation}
\label{eq:posterior}
\Pi^n(A) = \frac{N_n(A)}{D_n} = \frac{\int_A R_n(\theta) \, \Pi_n(d\theta)}{\int_{\Theta_n} R_n(\theta) \, \Pi_n(d\theta)}, \quad A \subseteq \Theta_n.
\end{equation}
The dependence of the prior on data requires some modification of the usual arguments for establishing concentration properties of $\Pi^n$. In particular, in Lemma~\ref{lem:denominator1}, the lower bound on the denominator $D_n$ in \eqref{eq:posterior} is obtained quite simply thanks to the data-dependent prior, formalizing the motivation for this empirical Bayes approach described in Section~\ref{S:intro}, while Lemma~\ref{lem:numerator1} applies H\"older's inequality to get an upper bound on the numerator $N_n(A_{M\eps_n})$.    

\begin{lem}
\label{lem:denominator1}
$D_n \geq e^{-dn\eps_n^2}\,R_n(\hat\theta_n)\,\Pi_n(\L_n)$.
\end{lem}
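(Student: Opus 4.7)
The plan is to start from the definition of $D_n$ as an integral over $\Theta_n$, restrict the domain of integration to the smaller set $\L_n$, and then exploit the very definition of $\L_n$ to pull a uniform lower bound on the likelihood ratio out of the integral.

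More concretely, I would first write $D_n = \int_{\Theta_n} R_n(\theta)\,\pi_n(\theta)\,\nu_n(d\theta) \geq \int_{\L_n} R_n(\theta)\,\pi_n(\theta)\,\nu_n(d\theta)$, using only that the integrand is nonnegative. Next, for every $\theta \in \L_n$, the definition \eqref{eq:Ln} gives $L_n(\theta) \geq e^{-d n\eps_n^2} L_n(\hat\theta_n)$; dividing both sides by $L_n(\theta^\star)$ converts this into the pointwise bound $R_n(\theta) \geq e^{-d n\eps_n^2}\,R_n(\hat\theta_n)$. Since the right-hand side is a constant in $\theta$, I can pull it out of the integral to obtain
\[ D_n \geq e^{-d n\eps_n^2}\,R_n(\hat\theta_n) \int_{\L_n} \pi_n(\theta)\,\nu_n(d\theta) = e^{-d n\eps_n^2}\,R_n(\hat\theta_n)\,\Pi_n(\L_n), \]
which is exactly the claimed inequality.

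There is really no main obstacle here; this lemma is precisely the payoff of the empirical prior construction advertised in Section~\ref{S:intro}. For a standard (non-data-dependent) prior, the analogous lower bound on $D_n$ requires delicate control of the prior mass on Kullback--Leibler neighborhoods of $\theta^\star$, whereas here the centering of the prior around the known $\hat\theta_n$ makes the argument a one-line consequence of the defining inequality for $\L_n$. The only thing worth double-checking in writing it up is that $\nu_n$ is non-data-dependent, so that $\Pi_n(\L_n) = \int_{\L_n} \pi_n \, d\nu_n$ holds with a data-dependent integrand $\pi_n$ and data-dependent integration region $\L_n$ without any measurability subtleties.
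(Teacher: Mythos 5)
Your proof is correct and is essentially the paper's argument: restrict the integral defining $D_n$ to $\L_n$, then use the defining inequality of $\L_n$ (equivalently, factor out $R_n(\hat\theta_n)$ and bound $L_n(\theta)/L_n(\hat\theta_n)$ from below by $e^{-dn\eps_n^2}$) to pull out the constant. Nothing further to add.
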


\begin{proof}
The denominator $D_n$ can be trivially lower-bounded as follows:
\[ D_n \geq \int_{\L_n} R_n(\theta) \, \pi_n(\theta) \, \nu_n(d\theta) = R_n(\hat\theta_n) \int_{\L_n} \frac{L_n(\theta)}{L_n(\hat\theta_n)} \, \pi_n(\theta) \, \nu_n(d\theta).  \]
Now use the definition of $\L_n$ to complete the proof.
\end{proof}

\begin{lem}
\label{lem:numerator1}
Assume Condition~GP1 holds for $\eps_n$ with constants $(K,p)$, and let $q > 1$ be the H\"older conjugate of $p$.  Then 
\[ \E_{\theta^\star}^n \Bigl\{ \frac{N_n(A_{M\eps_n})}{R_n(\hat\theta_n)^{1-\frac{1}{2q}}} \Bigr\} \leq e^{-G n \eps_n^2}, \]
where $G = M^2 q^{-1} - K$.  
\end{lem}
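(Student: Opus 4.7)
The plan is to exploit the MLE bound $R_n(\theta) \leq R_n(\hat\theta_n)$ to eliminate the $R_n(\hat\theta_n)$ normalization, then handle the data-dependent prior via Fubini plus H\"older's inequality, and finally bound the resulting Hellinger affinity via the definition of $A_{M\eps_n}$. The algebraic starting point is
\[
\frac{R_n(\theta)}{R_n(\hat\theta_n)^{1-1/(2q)}} = R_n(\theta)^{1/(2q)}\Bigl[\frac{R_n(\theta)}{R_n(\hat\theta_n)}\Bigr]^{1-1/(2q)} \leq R_n(\theta)^{1/(2q)},
\]
valid because $R_n(\theta) \leq R_n(\hat\theta_n)$ and the exponent $1-1/(2q)$ is positive. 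Since the integrand defining $N_n(A_{M\eps_n})$ is nonnegative, Fubini swaps expectation and integration, reducing the problem to bounding $\int_{A_{M\eps_n}} \E_{\theta^\star}\{R_n(\theta)^{1/(2q)} \pi_n(\theta)\}\,\nu_n(d\theta)$.

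Next, I would apply H\"older's inequality with conjugate exponents $q$ and $p$ to the integrand, giving
\[
\E_{\theta^\star}\{R_n(\theta)^{1/(2q)} \pi_n(\theta)\} \leq \bigl(\E_{\theta^\star}\{R_n(\theta)^{1/2}\}\bigr)^{1/q}\,\bigl(\E_{\theta^\star}\{\pi_n(\theta)^p\}\bigr)^{1/p}.
\]
The choice of exponent $1/(2q)$ on $R_n$ is exactly what is needed so that H\"older raises the first factor to the power $q$, producing the Hellinger affinity $\E_{\theta^\star}\{R_n(\theta)^{1/2}\} = \int (p_{\theta^\star}^n p_\theta^n)^{1/2}\,d\mu = 1 - \tfrac12 H^2(p_{\theta^\star}^n, p_\theta^n)$, while Condition~GP1 provides precisely the right control on the $\pi_n$ factor.

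Finally, on $A_{M\eps_n}$ the defining inequality $H^2(p_{\theta^\star}^n, p_\theta^n) > 1 - e^{-M^2 n\eps_n^2}$ gives an upper bound on the Hellinger affinity, which raised to the $1/q$ power yields an exponential factor of the form $e^{-(M^2/q) n\eps_n^2}$ uniformly over $\theta \in A_{M\eps_n}$ (modulo the routine reorganization exploiting $1-x \leq e^{-x}$). Pulling this supremum out of the integral and applying the Condition~GP1 bound $\int \bigl[\E_{\theta^\star}\{\pi_n(\theta)^p\}\bigr]^{1/p}\,\nu_n(d\theta) \leq e^{K n\eps_n^2}$ to what remains gives the product $e^{-(M^2/q - K) n\eps_n^2} = e^{-G n\eps_n^2}$, as required. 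I expect the main obstacle to be this last step, i.e., converting the Hellinger-distance lower bound defining $A_{M\eps_n}$ into the clean exponential bound on the affinity; the preceding ingredients (the MLE-dominance factorization, Fubini, H\"older, and the prior-mass condition) are routine manipulations.
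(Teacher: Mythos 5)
Your outline matches the paper's proof step for step: the same bound $R_n(\theta) \le R_n(\hat\theta_n)$ is used to peel off the factor $R_n(\hat\theta_n)^{1-1/(2q)}$, then Tonelli moves the expectation inside the integral, H\"older with conjugate exponents $(q,p)$ separates the likelihood-ratio and prior-density terms, and finally the affinity and Condition~GP1 finish the job. So the approach is identical, not a different route.

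However, the one step you defer as ``routine'' is exactly where your proposal, as written, fails. You compute the affinity as $\rho(\theta) := \E_{\theta^\star}\{R_n(\theta)^{1/2}\} = 1 - \tfrac12 H^2(p_{\theta^\star}^n, p_\theta^n)$, which is indeed correct for the Hellinger distance as literally defined in the paper's notation section, $H^2(f,g) = \int (\sqrt f - \sqrt g)^2\,d\mu$. But then membership in $A_{M\eps_n}$, i.e., $H^2(p_{\theta^\star}^n, p_\theta^n) > 1 - e^{-M^2 n\eps_n^2}$, only gives $\rho(\theta) < \tfrac12\bigl(1 + e^{-M^2 n\eps_n^2}\bigr)$, which is bounded away from zero and does not yield the exponential factor $e^{-(M^2/q)n\eps_n^2}$. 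The lemma requires the \emph{normalized} Hellinger distance, $H^2(f,g) = \tfrac12 \int(\sqrt f - \sqrt g)^2\,d\mu$, under which $\rho = 1 - H^2$ and hence $\rho(\theta) < e^{-M^2 n\eps_n^2}$ on $A_{M\eps_n}$, giving $\rho(\theta)^{1/q} < e^{-(M^2/q)n\eps_n^2}$ directly with no need to invoke $1-x \le e^{-x}$. This is the convention the paper tacitly uses: it is the only one under which the displayed i.i.d.\ identity $H^2(p_{\theta^\star}^n, p_\theta^n) = 1 - \{1 - H^2(p_{\theta^\star}, p_\theta)\}^n$ holds, and under which the paper's own proof line ``the first expectation \ldots equals $1 - H^2(p_{\theta^\star}^n, p_\theta^n)$'' is correct. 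With the normalization corrected, your argument closes exactly as you describe.
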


\begin{proof}
Start with the following simple bound:
\begin{align*}
N_n(A_{M\eps_n}) & = \int_{A_{M\eps_n}} R_n(\theta) \pi_n(\theta) \,\nu_n(d\theta) \\
& \leq R_n(\hat\theta_n)^{1-\frac{1}{2q}} \int_{A_{M\eps_n}} R_n(\theta)^{\frac{1}{2q}} \pi_n(\theta) \,\nu_n(d\theta). 
\end{align*}
Dividing both sides by $R_n(\hat\theta_n)^{1-\frac{1}{2q}}$, and taking expectations,  moving this expectation inside the integral, and applying H\"older's inequality, gives 
\[ \E_{\theta^\star}^n \Bigl\{ \frac{N_n(A_{M\eps_n})}{R_n(\hat\theta_n)^{1-\frac{1}{2q}}} \Bigr\} \leq \int_{A_{M\eps_n}} \bigl[\E_{\theta^\star}^n \{R_n(\theta)^{\frac12}\} \bigr]^{\frac1q} \bigl[ \E_{\theta^\star}^n \{\pi_n(\theta)^p\} \bigr]^{\frac1p} \,\nu_n(d\theta). \]
A standard argument \citep[e.g.,][]{walker.hjort.2001} shows that the first expectation on the right hand side above equals $1-H^2(p_{\theta^\star}^n, p_\theta^n)$ and, therefore, is upper bounded by $e^{-M^2 n \eps_n^2}$, uniformly in $\theta \in A_{M\eps_n}$.  Under Condition~GP1, the integral of the second expectation is bounded by $e^{Kn\eps_n^2}$.  Combining these two bounds proves the claim.  
\end{proof}

\begin{proof}[Proof of Theorem~\ref{thm:rate1}]
To start, set 
\[ a_n = e^{-c n \eps_n^2} \quad \text{and} \quad b_n =c_0\, e^{-(C+d)n\eps_n^2}\,R_n(\hat\theta_n), \]
where the constants $(C, c, d)$ are as in Condition~LP1, Remark~\ref{re:condR1}, and Equation \eqref{eq:Ln}, respectively, and $c_0$ is another sufficiently small constant.  Also, abbreviate $N_n = N_n(A_{M\eps_n})$ and $R_n = R_n(\hat\theta_n)$.  If $1(\cdot)$ denotes the indicator function, then 
\begin{align*}
\Pi^n(A_{M\eps_n}) & = \frac{N_n}{D_n} 1(R_n \geq a_n \text{ and } D_n \geq b_n) + \frac{N_n}{D_n} 1(R_n < a_n \text{ or } D_n < b_n) \\
& \leq \frac{R_n^{1-\frac{1}{2q}}}{b_n} \, \frac{N_n}{R_n^{1-\frac{1}{2q}}}1(R_n \geq a_n) + 1(R_n < a_n) + 1(D_n < b_n) \\
& \leq \frac{e^{(C+d)n\eps_n^2}}{a_n^{\frac{1}{2q}}} \, \frac{N_n}{R_n^{1-\frac{1}{2q}}} + 1(R_n < a_n) + 1(D_n < b_n) \\
& = e^{(C + \frac{c}{2q} + d) n \eps_n^2} \, \frac{N_n}{R_n^{1-\frac{1}{2q}}} + 1(R_n < a_n) + 1(D_n<b_n), 
\end{align*}
Taking expectation and applying Lemma~\ref{lem:numerator1}, we get 
\begin{equation}
\label{eq:finite.n.bound}
\E_{\theta^\star}^n \{\Pi^n(A_{M\eps_n})\} \leq e^{(C + \frac{c}{2q} + d) n \eps_n^2} e^{-G n \eps_n^2} + \prob_{\theta^\star}^n(R_n < a_n) + \prob_{\theta^\star}^n(D_n < b_n). 
\end{equation}
The second and third terms are $o(1)$ by Remark~\ref{re:condR1} and Lemma~\ref{lem:denominator1}, respectively.  If we take $G > C + \frac{c}{2q} + d$ or, equivalently, $M^2 > q(K + C + \frac{c}{2q} + d)$, then the first term is $o(1)$ as well, completing the proof of the first claim.

For the second claim, when $n \eps_n^2$ is bounded, the conclusion \eqref{eq:finite.n.bound} still holds, and the latter two terms are still $o(1)$.  The first term in the upper bound is decreasing in $G$ or, equivalently, in $M$, so the upper bound vanishes for any $M_n \to \infty$.  
\end{proof}

\subsection{Proof of Theorem~\ref{thm:rate2}}

The proof approach here is similar to that of Theorem~\ref{thm:rate1} above, with a few differences.  We will start with the posterior defined by the double empirical prior with Type~1 regularization described in Section~\ref{SS:unknown}.  For that version of the prior, the the posterior probability $\Pi^n(A_{M\eps_n})$ is a ratio $N_n(A_{M\eps_n})/ D_n$, where 
\[ N_n(A_{M\eps_n}) = \sum_{S \in \S_n} w_n(S) \int_{A_{M\eps_n} \cap \Theta_{n,S}} R_n(\theta)^\alpha \, \pi_{n,S}(\theta) \,\nu_{n,S}(d\theta) \]
and 
\[ D_n = \sum_{S \in \S_n} w_n(S) \int_{\Theta_{n,S}} R_n(\theta)^\alpha \,\pi_{n,S}(\theta) \,\nu_{n,S}(d\theta). \]
After proving Theorem~\ref{thm:rate2} for this case, we will describe the adjustments needed to get the same result with the Type~2 regularized double empirical prior.  Throughout, we will assume Conditions~S2, LP2, and GP2 hold with $\eps_n$.  

\begin{lem}
\label{lem:denominator2}
$D_n \geq e^{-d|S^\dagger|} \, w_n(S^\dagger) \, R_n(\hat\theta_{n,S^\dagger})^\alpha \, \Pi_{n,S^\dagger}(\L_{n,S^\dagger})$.
\end{lem}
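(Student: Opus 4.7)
The plan is to mirror the proof of Lemma~\ref{lem:denominator1} almost verbatim, exploiting the fact that the prior's data-dependent structure was designed precisely so that this lower bound becomes trivial. The whole argument is three mechanical steps; the real work is already encoded in the way $\L_{n,S}$ and $\Pi_{n,S}$ have been set up.

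First, I would discard all terms in the sum defining $D_n$ except the one indexed by the pseudo-true model $S_n^\star$ supplied by Condition~S2, giving
\[ D_n \geq w_n(S_n^\star) \int_{\Theta_{S_n^\star}} R_n(\theta)^\alpha \, \pi_{n,S_n^\star}(\theta) \,\nu_{n,S_n^\star}(d\theta). \]
Second, I would shrink the integration domain from $\Theta_{S_n^\star}$ down to the empirical neighborhood $\L_{n,S_n^\star}$ of the sieve MLE $\hat\theta_{n,S_n^\star}$, which only weakens the bound. Third, I would invoke the defining property of $\L_{n,S_n^\star}$, namely $L_n(\theta) \geq e^{-d|S_n^\star|} L_n(\hat\theta_{n,S_n^\star})$ for every $\theta$ in the neighborhood; dividing both sides by $L_n(\theta^\star)$ and raising to the power $\alpha$ gives the pointwise lower bound $R_n(\theta)^\alpha \geq e^{-\alpha d|S_n^\star|} R_n(\hat\theta_{n,S_n^\star})^\alpha$, and since $\alpha \in (0,1)$ this in turn is at least $e^{-d|S_n^\star|} R_n(\hat\theta_{n,S_n^\star})^\alpha$ (or, equivalently, one absorbs the factor of $\alpha$ into the constant $d$). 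Pulling this $\theta$-free quantity out of the integral leaves $\int_{\L_{n,S_n^\star}} \pi_{n,S_n^\star}(\theta)\,\nu_{n,S_n^\star}(d\theta) = \Pi_{n,S_n^\star}(\L_{n,S_n^\star})$, which assembles into exactly the claimed inequality.

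There is no real obstacle. The point of building the empirical prior around $\hat\theta_{n,S}$ and the likelihood-level set $\L_{n,S}$ is that the usual delicate prior-mass argument collapses into a single substitution. The only thing requiring foresight is choosing the right mixture component to keep, namely $S = S_n^\star$, so that later in the proof of Theorem~\ref{thm:rate2} both halves of Condition~LP2 (the lower bound on $w_n(S_n^\star)$ and the lower bound on $\Pi_{n,S_n^\star}(\L_{n,S_n^\star})$) can be applied directly to convert this bound into $D_n \gtrsim e^{-(A + C + d) n \eps_n^2} R_n(\hat\theta_{n,S_n^\star})^\alpha$, which is what the analogue of the event $\{D_n \geq b_n\}$ in the proof of Theorem~\ref{thm:rate1} will require.
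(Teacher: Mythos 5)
Your proof is correct and is essentially the same argument the paper gives (the paper simply says ``almost identical to the proof of Lemma~\ref{lem:denominator1}''): restrict to the $S_n^\star$ mixture component, restrict the integration to $\L_{n,S_n^\star}$, and apply the defining property of that set. Your observation that the natural constant is $e^{-\alpha d|S_n^\star|}$, which is only then weakened to the stated $e^{-d|S_n^\star|}$ because $\alpha \in (0,1)$, is a careful touch that the paper glosses over.
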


\begin{proof}
Almost identical to the proof of Lemma~\ref{lem:denominator1}.  
\end{proof}

\begin{lem}
\label{lem:numerator2}
Let $K \geq 0$ and $p > 1$ be the constants in Condition~GP2, let $q > 1$ be the H\"older conjugate of $p$, and take $\alpha$ in $(0,1-p^{-1})$.  Then 
\[ \E_{\theta^\star}^n \{ N_n(A_{M\eps_n}) \} \lesssim e^{-G n\eps_n^2}, \]  
where $G=M^2 k - K$ and $k$ depends only on $\alpha$ and $q$.
\end{lem}

\begin{proof}
Abbreviate $N_n(A_{M\eps_n})$ by $N_n$.  Taking expectation of $N_n$, moving expectation inside the integral, and applying H\"older's inequality, we get 
\[ \E_{\theta^\star}^n (N_n) \leq \sum_{S \in \S_n} w_n(S) \int_{A_{M\eps_n} \cap \Theta_{n,S}} \bigl[\E_{\theta^\star}^n \{R_n(\theta)^{\alpha q}\} \bigr]^{\frac1q} \bigl[ \E_{\theta^\star}^n \{\pi_{n,S}(\theta)^p\} \bigr]^{\frac1p} \,\nu_{n,S}(d\theta). \]
Consider the first expectation on the right-hand side, $\E_{\theta^\star}^n\{R_n(\theta)^{\alpha q}\}$.  Let $r=\alpha q$, which is in $(0,1)$ by the choice of $\alpha$.  Then the expected likelihood ratio can be written as $\int f^r g^{1-r} \, d\mu$, where $f$ and $g$ are joint densities corresponding to $\theta$ and $\theta^\star$, respectively.  This latter integral is related to the R\'enyi divergence of order $r$ which, in turn, is related to the Hellinger distance.  Indeed, by Theorem~16 in \citet{vanerven.renyi}, it is easy to see that 
\[ \E_{\theta^\star}^n\{R_n(\theta)^{\alpha q}\} \leq \{1 - H^2(p_{\theta^\star}^n, p_\theta^n)\}^{k'}, \]
where $k'$ only depends on $\alpha q$.  Therefore, by definition of $A_{M\eps_n}$, the right-hand side is upper bounded by $e^{-M^2 k' n \eps_n^2}$, uniformly in $\theta \in A_{M\eps_n} \cap \Theta_{n,S}$ and in $S$, so 
\[ \E_{\theta^\star}^n (N_n) \leq e^{-(M^2k'/q) n \eps_n^2} \sum_{S \in \S_n} w_n(S) \int_{A_{M\eps_n} \cap \Theta_{n,S}} \bigl[ \E_{\theta^\star}^n \{\pi_{n,S}(\theta)^p\} \bigr]^{\frac1p} \,\nu_{n,S}(d\theta). \]
Under Condition~GP2, the summation on the right-hand side above is bounded by a constant times $e^{K n\eps_n^2}$ and the claim now follows with $k=k' q^{-1}$.
\end{proof}

\begin{proof}[Proof of Theorem~\ref{thm:rate2}]
By Lemma~\ref{lem:denominator2} and Condition~LP2, 
\[ D_n \geq e^{-d|S^\dagger|} e^{-A n\eps_n^2} R_n(\hat\theta_{n, S_n^\star})^\alpha e^{-C n \eps_n^2}. \]
And by \eqref{eq:condR2} we have $R_n(\hat\theta_{n,S^\dagger}) \geq e^{-c n \eps_n^2}$ for some $c > 1$, with $\prob_{\theta^\star}^n$-probability converging to 1.  Since $|S^\dagger| \leq n\eps_n^2$, this lower bound for the denominator can be combined with the upper bound in the numerator from Lemma~\ref{lem:numerator2} using an argument very similar to that in the proof of Theorem~\ref{thm:rate1}, to get 
\[ \E_{\theta^\star}^n\{\Pi^n(A_{M\eps_n})\} \leq e^{-\{M^2 k - (K + A + C + c\alpha + d)\} n \eps_n^2} + o(1). \]
So, for $M$ sufficiently large, the upper bound vanishes, proving the claim.
\end{proof}

It turns out that the proof for the Type~2 regularized double empirical prior follows along almost the same lines.  The key is that we do not need to be concerned about the normalizing constant in the definition of $\widetilde w_n$ in \eqref{eq:wtilde} because it appears in both the numerator and denominator of the posterior probability.  Similarly, we can replace $L_n(\hat\theta_{n,S})$ in \eqref{eq:wtilde} by $R_n(\hat\theta_{n,S})$ so, for the proof, we are free to assume that
\[ \widetilde w_n(S) = \frac{w_n(S)}{R_n(\hat\theta_{n,S})^{1-\alpha}}, \quad S \in \S_n. \]
With this, the bound on the denominator, $D_n$, of the posterior probability from Lemma~\ref{lem:denominator2} is unchanged.  For the numerator, $N_n(A_{M\eps_n})$, note the following trivial inequality: 
\[ R_n(\theta) = R_n(\theta)^{1-\alpha} R_n(\theta)^\alpha \leq R_n(\hat\theta_{n,S})^{1-\alpha} R_n(\theta)^\alpha. \]
Consequently, 
\[ \widetilde w_n(S) \int_{A_{M\eps_n} \cap \Theta_{n,S}} R_n(\theta) \, \pi_{n,S}(\theta) \, \nu_{n,S} \leq w_n(S) \int_{A_{M\eps_n} \cap \Theta_{n,S}} R_n(\theta)^\alpha \, \pi_{n,S}(\theta) \, \nu_{n,S}, \]
and the right-hand side is exactly what we bounded in the proof of Lemma~\ref{lem:numerator2}.  So we can put together the bounds on the numerator and denominator exactly like we did above to obtain the $\eps_n$ posterior convergence rate for the Type~2 regularized version of the double empirical prior.

\section{Examples}
\label{S:examples}

\subsection{Fixed finite-dimensional parameter estimation}
\label{SS:finite.dim}

Suppose that the parameter space $\Theta$ is a fixed subset of $\RR^d$, for a fixed $d < \infty$.  Under the usual regularity conditions, the log-likelihood $\ell_n = \log L_n$ is twice continuously differentiable, its derivative $\dot\ell_n$ satisfies $\dot\ell_n(\hat\theta_n) = 0$ at the (unique) global MLE $\hat\theta_n$, and the following expansion holds:
\begin{equation}
\label{eq:quadratic}
\ell_n(\theta) - \ell_n(\hat\theta_n) = -\tfrac12 (\theta - \hat\theta_n)^\top \hat{\Sigma}_n (\theta - \hat\theta_n) + o(n\|\theta-\hat\theta_n\|^2), 
\end{equation}
where $\hat{\Sigma} _n = -\ddot\ell_n(\hat\theta_n)$.  Then the set $\L_n$ can be expressed as 
\[ \L_n = \bigl\{\theta: (\theta - \hat\theta_n)^\top \Sigma_n (\theta - \hat\theta_n) < a n\eps_n^2 \bigr\}. \]
For rate $\eps_n=n^{-1/2}$, this suggests an empirical prior of the form:
\begin{equation}
\label{eq:finite.dim.prior}
\Pi_n = \nm_d\bigl(\hat\theta_n, n^{-1}\Psi^{-1}\bigr), 
\end{equation}
for some fixed positive definite matrix $\Psi$ in order to ensure S1. The proposition below states that this empirical prior yields a posterior that concentrates at the parametric rate $\eps_n = n^{-1/2}$.  Note that we do not need any additional fine-tuning, like in Theorem~2.4 of \citet{ggv2000}, to get optimal rates in the finite-dimensional case. 

\begin{prop}
\label{prop:finite.dim}
Assume that each component $\theta_j$ in the $d$-dimensional parameter $\theta$ are on $(-\infty, \infty)$, and that the regularity conditions necessary to establish the quadratic approximation \eqref{eq:quadratic} hold.  Then Conditions~LP1 and GP1 hold for the empirical prior \eqref{eq:finite.dim.prior} with $\eps_n = n^{-1/2}$.  Therefore, the posterior, with $\alpha=1$, concentrates at the rate $\eps_n=n^{-1/2}$ relative to any metric on $\Theta$.  
\end{prop}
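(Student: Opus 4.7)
The plan is to invoke Theorem~\ref{thm:rate1} at rate $\eps_n = n^{-1/2}$, so the task reduces to verifying Conditions~S1, LP1, and GP1. Since the parameter space is fixed, I take $\Theta_n = \Theta$ for all $n$ and set the pseudo-true parameter $\theta^\dagger = \theta^\star$; then the KL and V divergences vanish identically and Condition~S1 is immediate.

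For Condition~LP1, I use the quadratic expansion \eqref{eq:quadratic} to write $\L_n$, up to the negligible remainder term, as the ellipsoid
\[ \bigl\{\theta: (\theta-\hat\theta_n)^\top \hat\Sigma_n(\theta-\hat\theta_n) \leq 2 d n\eps_n^2\bigr\}. \]
Under the usual regularity conditions underlying \eqref{eq:quadratic}, $n^{-1}\hat\Sigma_n \to J^\star$ almost surely, where $J^\star$ is the Fisher information at $\theta^\star$. Changing variables to $Z = \sqrt{n}\,\Psi^{1/2}(\theta-\hat\theta_n)$, which is $\nm_d(0,I_d)$ under the prior, the defining inequality becomes $Z^\top A_n Z \leq 2 d$, where $A_n = n^{-1}\Psi^{-1/2}\hat\Sigma_n \Psi^{-1/2} \to \Psi^{-1/2} J^\star \Psi^{-1/2}$ almost surely. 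Consequently $\Pi_n(\L_n)$ converges a.s.\ to the strictly positive constant $\prob\bigl(Z^\top \Psi^{-1/2} J^\star \Psi^{-1/2} Z \leq 2 d\bigr)$, and since $n\eps_n^2 = 1$, Condition~LP1 follows.

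For Condition~GP1, I follow the toy-example computation. Writing $\pi_n(\theta)^p$ as a Gaussian density with precision $pn\Psi$ times an $n$-dependent scalar gives $\pi_n(\theta)^p \propto n^{pd/2}\,\nm_d(\theta \mid \hat\theta_n, (pn)^{-1}\Psi^{-1})$. Under regularity, $\sqrt{n}(\hat\theta_n - \theta^\star)$ is asymptotically $\nm_d(0, J^{\star -1})$ with sufficient tail control that the expectation over $\prob_{\theta^\star}$ can be taken against the Gaussian approximation $\hat\theta_n \approx \nm_d(\theta^\star, n^{-1}J^{\star -1})$. The normal convolution identity then yields $\E_{\theta^\star}\{\pi_n(\theta)^p\} \propto n^{pd/2}\,\nm_d\bigl(\theta \mid \theta^\star, n^{-1}(p^{-1}\Psi^{-1} + J^{\star -1})\bigr)$; raising this to the $1/p$ power and integrating in $\theta$ produces an expression in which the $n$-factors cancel, leaving a bounded constant that is trivially $\leq e^{K n\eps_n^2} = e^K$ for any $K > 0$.

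The main obstacle is making the last Gaussian convolution rigorous, since the MLE is only asymptotically normal. A clean route is to split $\prob_{\theta^\star}$ according to the event $\{\|\hat\theta_n - \theta^\star\| \leq \delta_n\}$ for some $\delta_n \to 0$ slower than $n^{-1/2}$: on this event the quadratic expansion \eqref{eq:quadratic} together with tightness of $\sqrt{n}(\hat\theta_n-\theta^\star)$ justifies the Gaussian computation up to a $1 + o(1)$ factor, while its complement has vanishing probability by consistency of $\hat\theta_n$. Once S1, LP1, GP1 are in hand, the second part of Theorem~\ref{thm:rate1} delivers $\E_{\theta^\star}\{\Pi^n(A_{M_n\eps_n})\} \to 0$ for any $M_n \to \infty$. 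In the i.i.d.\ case this translates into Hellinger concentration of the marginals at rate $n^{-1/2}$, and local equivalence of Hellinger and Euclidean distance on $\Theta$ near $\theta^\star$, combined with the equivalence of all norms on the fixed finite-dimensional space, then yields the claimed concentration at rate $n^{-1/2}$ with respect to any (topologically equivalent) metric on $\Theta$.
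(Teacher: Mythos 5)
Your argument is essentially the same as the paper's: take $\Theta_n \equiv \Theta$, $\theta^\dagger = \theta^\star$ to make S1 trivial; use the quadratic expansion to write $\L_n$ as a small ellipsoid around $\hat\theta_n$, change variables to show the normal prior puts positive mass on it (LP1); and apply the normal-power and normal-convolution computations from the toy example to bound the GP1 integral by a constant, then invoke Theorem~\ref{thm:rate1}. One small point of difference worth noting: the paper's appendix silently replaces $\hat\Sigma_n$ by $n\Psi$ in the defining inequality for $\L_n$ and treats $\hat\theta_n$ as exactly $\nm_d(\theta^\star, n^{-1}\Sigma^{\star -1})$, whereas you spell out the a.s.\ limit $n^{-1}\hat\Sigma_n \to J^\star$ for LP1 and flag the asymptotic-normality approximation for GP1, sketching a truncation argument to justify it; your version is thus slightly more careful than the paper's where the paper is deliberately informal, but the substance and structure of the two proofs coincide.
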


\begin{proof}
See the Appendix.  
\end{proof}  


\subsection{Density estimation via histograms}
\label{SS:histogram}

Consider estimation of a density function, $p$, supported on the compact interval $[0,1]$, based on iid samples $X_1,\ldots,X_n$.  A simple approach to develop a Bayesian model for this problem is a random histogram prior \citep[e.g.,][]{scricciolo2007, scricciolo2015}.  That is, we consider a partition of the interval $[0,1]$ into $S$ bins of equal length, i.e., $[0,1] = \bigcup_{s=1}^S E_s$, where $E_s = [\frac{s-1}{S}, \frac{s}{S})$, $s=1,\ldots,S$.  For a given $S$, write the model
\[ p_\theta(x) = \sum_{s=1}^S \theta_s \, \unif(x \mid E_s), \quad x \in [0,1], \]
consisting of mixtures of uniforms, i.e., piecewise constant densities, where the parameter $\theta$ is a vector in the $S$-dimensional probability simplex, 
\[ \Delta(S) = \{(\theta_1,\ldots,\theta_S): \theta_s \geq 0, \, \textstyle\sum_{s=1}^S \theta_s = 1\}. \]
That is, $p_\theta$ is effectively a histogram with $S$ bins, all of the same width, $S^{-1}$, and the height of the $s^\text{th}$ bar is $S^{-1} \theta_s$, $s=1,\ldots,S$.  Here, assuming the regularity of the true density is known, we construct an empirical prior for the vector parameter $\theta$ such that, under conditions on the true density, the corresponding posterior on the space of densities has Hellinger concentration rate within a logarithmic factor of the minimax rate.  More sophisticated models for density estimation will be presented in Sections~\ref{SS:mixture1} and \ref{SS:mixture2}. 

Let $S=S_n$ be the number of bins, specified below.  This defines a sieve $\Theta_n = \Delta(S_n)$ and, under the proposed histogram model, the data can be treated as multinomial, so the (sieve) MLE is $\hat\theta_n = (\hat\theta_{n,1},\ldots,\hat\theta_{n,S})$, where $\hat\theta_{n,s}$ is just the proportion of observations in the $s^\text{th}$ bin, $s=1,\ldots,S$.  Here we propose a Dirichlet prior $\Pi_n$ for $\theta$, namely, 
\[ \theta \sim \Pi_n = \dir_S(\hat\alpha), \quad \hat\alpha_s = 1 + c \, \hat\theta_{n,s}, \quad s=1,\ldots,S, \]
which is centered on the sieve MLE in the sense that the mode of the empirical prior density is $\hat\theta_n$; the factor $c=c_n$ will be specified below.  Finally, this empirical prior for $\theta$ determines an empirical prior for the density via the mapping $\theta \mapsto p_\theta$.  

\begin{prop}
\label{prop:histogram}
Suppose that the true density, $p^\star$, is uniformly bounded away from 0 and is H\"older continuous with smoothness parameter $\beta$, where $\beta \in (0,1]$ is assumed to be known.  Set $\eps_n = n^{-\kappa} \log^\kappa n$, where $\kappa = \beta / (2\beta + 1)$.  For the empirical prior $\Pi_n$ described above, if $S=S_n=n \eps_n^2 (\log n)^{-1}$ and $c=c_n = n \eps_n^{-2}$, then there exists $M > 0$ such that the corresponding posterior $\Pi^n$, with $\alpha=1$, satisfies 
\[ \E_{p^\star}^n\bigl[ \Pi^n(\{\theta: H(p^\star, p_\theta) > M \eps_n\}) \bigr] \to 0. \]
\end{prop}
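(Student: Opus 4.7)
The plan is to verify Conditions~S1, LP1, and GP1 for the proposed empirical Dirichlet prior at the target rate $\eps_n = n^{-\kappa}\log^\kappa n$, then to invoke Theorem~\ref{thm:rate1}. Here the ambient parameter space $\Theta$ is taken to be the set of densities on $[0,1]$, with $p_{\theta^\star} = p^\star$, and the sieves are the piecewise constant models $\Theta_n = \Delta(S_n)$. The i.i.d.\ Hellinger factorization noted in Section~\ref{SS:notation} converts the conclusion of Theorem~\ref{thm:rate1} directly into the required marginal Hellinger statement.

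For Condition~S1, set $\theta^\dagger_s = \int_{E_s} p^\star(x)\,dx$, which gives the piecewise constant approximation $p_{\theta^\dagger}$ of $p^\star$. H\"older-$\beta$ continuity yields $\|p^\star - p_{\theta^\dagger}\|_\infty \lesssim S_n^{-\beta}$, and the uniform lower bound on $p^\star$ converts this $L^\infty$ error into the bounds $K(p^\star, p_{\theta^\dagger}) \vee V(p^\star, p_{\theta^\dagger}) \lesssim S_n^{-2\beta}$. With $S_n = n\eps_n^2/\log n$ and $\kappa=\beta/(2\beta+1)$, the polynomial rates in $S_n^{-2\beta}$ and $\eps_n^2$ match and the residual log factors balance, so after tensorising by $n$ the Kullback--Leibler and variance bounds required by S1 hold.

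For Condition~LP1, observe that under the histogram model the likelihood is multinomial, so $\L_n$ is exactly the ``multinomial KL ball'' $\{\theta \in \Delta(S_n): \sum_s \hat\theta_{n,s}\log(\hat\theta_{n,s}/\theta_s) \leq d\eps_n^2\}$ around the mode $\hat\theta_n$ of the prior. The concentration $c_n = n/\eps_n^2$ is chosen so that under $\dir_{S_n}(1 + c_n\hat\theta_n)$ the coordinate-wise standard deviation of $\theta_s - \hat\theta_{n,s}$ is of order $\sqrt{\hat\theta_{n,s}/c_n}$; a second-order expansion of the multinomial KL makes its typical value of order $S_n/c_n = \eps_n^4/\log n$, which is much smaller than $\eps_n^2$, and Chebyshev's inequality then gives $\Pi_n(\L_n) \geq \tfrac12$ with $\prob_{\theta^\star}$-probability~$1$ for all large $n$, trivially dominating $e^{-Cn\eps_n^2}$. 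Condition~GP1 requires $\int [\E_{\theta^\star}\{\pi_n(\theta)^p\}]^{1/p}\,d\theta \leq e^{Kn\eps_n^2}$ for the Dirichlet density $\pi_n$; writing the density explicitly, choosing $p>1$ close to $1$ so that the exponents $p(\hat\alpha_s-1)+1$ remain safely positive, and applying Stirling to the resulting Gamma-function ratios produces a bound of the form $\exp\{O(S_n \log(c_n/S_n))\} = \exp\{O(n\eps_n^2)\}$.

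The main technical obstacle is the GP1 accounting, because the Dirichlet density is very peaked when $c_n \gg 1$, and the $p$-th moment calculation must avoid the trivial divergence that would arise as $p \to \infty$: the key trick is to take $p$ close to $1$ and to let the $1/\log n$ factor built into $S_n$ absorb the growth coming from Stirling's formula. Once Conditions~S1, LP1, and GP1 are in hand, Theorem~\ref{thm:rate1} yields $\E_{p^\star}\{\Pi^n(A_{M\eps_n})\} \to 0$ for some $M > 0$; finally, the identity $1-H^2(p_{\theta^\star}^n, p_\theta^n) = \{1-H^2(p^\star, p_\theta)\}^n$ from Section~\ref{SS:notation} shows that $A_{M\eps_n}$ contains $\{\theta: H(p^\star, p_\theta) > M\eps_n\}$ after a harmless adjustment of the constant $M$, completing the proof.
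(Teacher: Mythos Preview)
Your overall strategy---verify S1, LP1, GP1 and invoke Theorem~\ref{thm:rate1}---is the same as the paper's, and your S1 argument is essentially identical to theirs.  The LP1 and GP1 verifications, however, are handled differently.

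For LP1 the paper does \emph{not} work with the KL ball directly.  Instead it computes the Dirichlet mean of the multinomial likelihood in closed form, obtaining $\E\{L_n(\theta)\}/L_n(\hat\theta_n)\geq \Gamma(c+S)c^n/\Gamma(c+S+n)$, and then applies a ``reverse Markov'' inequality to convert this into a lower bound on $\Pi_n(\L_n)$.  Your route---bounding the Dirichlet-expected KL by $O(S_n/c_n)$ and then using Markov/Chebyshev---is a legitimate alternative and, once made precise (e.g.\ via $K(\hat\theta,\theta)\leq \chi^2(\hat\theta,\theta)$ and the Dirichlet inverse-moment formula $\E[1/\theta_s]=(\alpha_0-1)/(\hat\alpha_s-1)$), actually yields the stronger conclusion $\Pi_n(\L_n)\to 1$.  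The paper's approach is slicker because it avoids any second-order expansion and delivers an exact Gamma-ratio to which Stirling applies directly.

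For GP1 the paper again takes a shorter path than you sketch: it bounds the Dirichlet density \emph{uniformly} by its value at the mode $\hat\theta_n$, and a short Stirling calculation shows this mode value is at most $(c+S)^{c+S+1/2}/c^c$, a bound that is \emph{data-free}.  The $\E_{\theta^\star}$ then disappears and one simply integrates the constant over the simplex.  Your plan to integrate $\pi_n(\theta)^p$ and apply Stirling to the resulting Gamma ratios would also work, but you have not said how you pass the outer $\E_{\theta^\star}$ through the $[\cdot]^{1/p}$ and the $\theta$-integral; the cleanest fix is exactly the paper's observation that a uniform (data-free) density bound makes this issue vanish.  Incidentally, your worry about keeping $p$ close to $1$ so that ``the exponents $p(\hat\alpha_s-1)+1$ remain safely positive'' is unnecessary here: since $\hat\alpha_s-1=c\hat\theta_{n,s}\geq 0$, those exponents are $\geq 1$ for every $p>1$.
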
 

\begin{proof}
See the Appendix.  
\end{proof}

\subsection{Mixture density estimation}
\label{SS:mixture1}

Let $X_1,\ldots,X_n$ be iid~samples from a density $p_\theta$ of the form
\begin{equation}
\label{eq:mixture}
p_\theta(x)=\int k(x \mid \mu) \, \theta(d\mu), 
\end{equation}
where $k(x \mid \mu)$ is a known kernel and the mixing distribution $\theta$ is unknown.  Here we focus on the normal mixture case, where $k(x \mid \mu) = \nm(x \mid \mu, \sigma^2)$, where $\sigma$ is known, but see Remark~\ref{re:kernel}.  The full parameter space $\Theta$, which contains the true mixing distribution $\theta^\star$, is the set of all probability measures on the $\mu$-space, but we consider here a finite mixture model of the form 
\begin{equation}
\label{eq:finite.mixture}
\theta = (\omega,\mu) \mapsto p_\theta(\cdot) = \sum_{s=1}^S \omega_s \, k(\cdot \mid \mu_s), 
\end{equation}
for an integer $S$, a vector $\omega=(\omega_1,\ldots,\omega_S)$ in the simplex $\Delta(S)$, and a set of distinct support points $\mu=(\mu_1,\ldots,\mu_S)$.  For fixed $S$, let $\hat\theta=(\hat \omega, \hat \mu)$ be the MLE for the mixture weights and locations, respectively, where the optimization is restricted so that $|\hat \mu_s| \leq B$, where $B=B_n$ is to be determined.  We propose to ``center'' an empirical prior on the $S$-specific MLE as follows:
\begin{itemize}
\item $\omega$ and $\mu$ are independent; 
\vspace{-2mm}
\item the vector $\omega$ is $\dir_S(\hat\alpha)$ like in Section~\ref{SS:histogram}, where $\hat\alpha_s = 1 + c \, \hat \omega_s$, $s=1,\ldots,S$;  
\vspace{-2mm}
\item the components $(\mu_1,\ldots,\mu_S)$ of $\mu$ are independent, with 
\[ \mu_s \sim \unif(\hat \mu_s - \delta_n, \hat \mu_s + \delta_n), \quad s = 1,\ldots, S, \]
where $\delta_n$ is a sequence of positive constants to be determined.    
\end{itemize}  
To summarize, we have an empirical prior $\Pi_n$ for $\theta = (\omega,\mu)$, supported on the sieve $\Theta_n = \Delta(S) \times \RR^S$, where $S=S_n$ will be specified, with density function 
\[ \pi_n(\theta) = \dir_S(\omega \mid \hat\alpha) \times \prod_{s=1}^S \unif(\mu_s \mid \hat \mu_s - \delta_n, \hat \mu_s + \delta_n). \]
This determines an empirical prior for the density function through the mapping \eqref{eq:finite.mixture}.  


\begin{prop}
\label{prop:mixture1}
Suppose that the true mixing distribution $\theta^\star$ in \eqref{eq:mixture} has compact support.  Set $\eps_n = (\log n)^{1/2} n^{-1/2}$.  If $S_n \propto n\eps_n^2 (\log n)^{-1} = \log n$, $B_n \propto \log^{1/2}(\eps_n^{-1})$, $c_n = n \eps_n^{-2} = n^2/(\log n)^2$, and $\delta_n \propto \eps_n$, then there exists $M > 0$ such that the posterior $\Pi^n$, with $\alpha=1$, corresponding to the empirical prior described above satisfies 
\[ \E_{\theta^\star}^n[ \Pi^n(\{\theta \in \Theta_n: H(p_{\theta^\star}, p_\theta) > M \eps_n\})] \to 0. \]
\end{prop}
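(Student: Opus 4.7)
The plan is to invoke Theorem~\ref{thm:rate1}, which requires verifying Conditions~S1, LP1, and GP1 for the empirical prior on the sieve $\Theta_n = \Delta(S_n) \times [-B_n, B_n]^{S_n}$ with rate $\eps_n = (\log n)n^{-1/2}$, so that $n\eps_n^2 = \log^2 n$ and the stated choices satisfy $S_n \log c_n \asymp S_n \log(B_n/\delta_n) \asymp n\eps_n^2$. Because the $X_i$ are i.i.d., the discussion surrounding $A_{M\eps_n}$ in Section~\ref{SS:notation} converts the conclusion of Theorem~\ref{thm:rate1} into the single-observation Hellinger statement asserted in the proposition.

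Condition~S1 follows from standard discrete moment-matching approximations for compactly supported mixtures of analytic kernels: since $\theta^\star$ has compact support, constructions underlying, e.g., \citet{ghosalvaart2007} and \citet{kruijer.rousseau.vaart.2010} produce a $\theta^\dagger = \theta_n^\dagger \in \Theta_{S_n}$ supported on at most $S_n = O(\log n)$ atoms inside $\supp(\theta^\star) \subseteq [-B_n, B_n]$ (for all large $n$, since $B_n \to \infty$) with $\max\{K, V\}(p_{\theta^\star}, p_{\theta^\dagger}) = o(1/n)$; multiplied by $n$ this is far smaller than $n\eps_n^2 = \log^2 n$.

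For Condition~LP1, I would use a local quadratic expansion of $\log L_n$ in the $(\omega,\mu)$ coordinates around $\hat\theta_n$, valid on the sieve by smoothness of normal mixtures, to produce $\L_n \supseteq \{(\omega,\mu): \|\omega - \hat\omega\|_\infty \leq c_0\eps_n, \max_s|\mu_s - \hat\mu_s| \leq c_0\eps_n\}$ for some $c_0 > 0$. The uniform components on the locations place mass $(c_0\eps_n/\delta_n)^{S_n} \asymp e^{-O(S_n)} = e^{-O(\log n)}$ on the $\mu$-ball (since $\delta_n \propto \eps_n$); the Dirichlet$_{S_n}(\hat\alpha)$ with $c_n = n/\eps_n^2$ has per-coordinate standard deviation of order $\eps_n/\sqrt{n} \ll \eps_n$, so it assigns mass $1 - o(1)$ to the $\omega$-ball by Chebyshev plus a union bound over $S_n$ components. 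The product is at least $e^{-O(\log n)}$, which dominates $e^{-C\log^2 n} = e^{-Cn\eps_n^2}$ for any $C > 0$, giving LP1.

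For Condition~GP1, by independence of the Dirichlet and uniform factors, the integral in GP1 splits. For each uniform coordinate, a H\"older/Jensen calculation yields $\int_{-B_n}^{B_n} [\E_{\theta^\star}\{\pi_n^\mu(\mu_s)^p\}]^{1/p} d\mu_s \lesssim (B_n/\delta_n)^{(p-1)/p}$, so the product over $S_n$ coordinates is $\exp(O(S_n \log(B_n/\delta_n))) = \exp(O(n\eps_n^2))$. For the Dirichlet part, I would exchange the order of expectation and integration via H\"older, bounding $\int [\E \dir_{S_n}(\omega|\hat\alpha)^p]^{1/p} d\omega$ by $(\E\int \dir_{S_n}^p d\omega)^{1/p}$ up to a simplex-volume factor; Stirling's approximation applied to $\int \dir_{S_n}(\omega|\hat\alpha)^p d\omega$ with $\hat\alpha_j = 1 + c_n\hat\omega_j$ gives a bound of order $c_n^{(p-1)(S_n-1)/2}$, whose $p$-th root is $\exp(O(S_n \log c_n)) = \exp(O(n\eps_n^2))$. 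Combining the Dirichlet and uniform contributions yields GP1.

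The main obstacle is the Dirichlet factor in GP1: the concentration parameters $\hat\alpha_j = 1 + c_n\hat\omega_j$ are random and heterogeneous, with $\hat\omega_j$ possibly near zero, which makes the density spiky near the boundary of the simplex and requires care with the integrability of the $\prod_j \omega_j^{-1/2}$ singularities emerging from Stirling's expansion. A secondary difficulty is establishing the uniform quadratic expansion of $\log L_n$ for LP1 with constants depending only polylogarithmically on $n$, since the sieve consists of mixtures whose number of components $S_n$ grows and locations can be close together in $[-B_n,B_n]$, potentially degrading the effective Fisher information.
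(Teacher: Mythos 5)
Your overall structure (verify S1, LP1, GP1 and invoke Theorem~\ref{thm:rate1}) is right, and the quantitative bookkeeping relating $S_n$, $c_n$, $\delta_n$, $B_n$ to $n\eps_n^2 = (\log n)^2$ is sound. But the verification of Condition~LP1 has a genuine gap, which you in fact identified yourself.

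Your LP1 argument relies on a local quadratic expansion of $\log L_n$ around $\hat\theta_n$ to identify an $\ell_\infty$-ball contained in $\L_n$. For finite mixture models this is exactly where the argument breaks: the observed Fisher information for $(\omega, \mu)$ degenerates whenever two locations $\mu_s, \mu_{s'}$ are close (or a weight $\hat\omega_s$ is near zero), and nothing in the construction prevents the sieve MLE from sitting near such a singular configuration. With $S_n \to \infty$ components packed into $[-B_n, B_n]$, there is no uniform lower bound on the curvature, so the ``constant $c_0$'' in your proposed inclusion $\L_n \supseteq \{\|\omega - \hat\omega\|_\infty \leq c_0\eps_n,\ \max_s |\mu_s - \hat\mu_s| \leq c_0\eps_n\}$ is not controlled, and the subsequent mass computation does not go through. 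The paper avoids this entirely: it never localizes $\L_n$ geometrically. Instead it lower-bounds $\E_{\Pi_n}\{L_n(\theta)\}/L_n(\hat\theta_n)$ directly, using the Dirichlet moment formula for the $\omega$-part (as in the histogram example) and Jensen's inequality $\E(e^Z) \geq e^{\E Z}$ applied inside each Gaussian kernel for the $\mu$-part (this is the inequality labelled \eqref{jenn}), and then converts this expectation bound into a bound on $\Pi_n(\L_n)$ via the reverse Markov inequality \eqref{eq:reverse}. That device is the key idea you are missing; it requires no regularity of the log-likelihood at all.

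Two smaller points. For GP1, your worry about $\prod_j \omega_j^{-1/2}$ singularities is unfounded: with $\hat\alpha_s = 1 + c_n\hat\omega_s \geq 1$ the Dirichlet density is bounded, and the paper simply bounds $\dir_{S}(\omega \mid \hat\alpha)$ uniformly by its modal value $\dir_S(\hat\theta \mid \hat\alpha)$, making the $\E_{\theta^\star}$ trivial and avoiding the Jensen/Hölder exchange you contemplate. For S1, the approximation error from a finite normal mixture with $S_n \asymp \log n$ components is of order $\eps_n^2 = (\log n)^2/n$ (via Section~4 of Ghosal and van~der Vaart, 2001), not $o(1/n)$ as you assert; your claim is stronger than what the approximation theory provides, though fortuitously the weaker, correct bound is exactly what Condition~S1 requires.
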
 

\begin{proof}
See the Appendix.  
\end{proof}

\begin{remark}
\label{re:kernel}
The proof of Proposition~\ref{prop:mixture1} is not especially sensitive to the choice of kernel.  More specifically, the local prior support condition, LP1, can be verified for kernels other than normal, the key condition being Equation~\eqref{jenn} in the Appendix.  For example, that condition can be verified for the Cauchy kernel 
\[ k(x \mid \mu) = \frac{1}{\sigma\pi}\Bigl\{ 1+\frac{(x-\mu)^2}{\sigma^2} \Bigr\}^{-1}, \]
where $\sigma$ is a fixed scale parameter.  Therefore, using the same empirical prior formulation as for the normal case, the same argument in the proof of Proposition~\ref{prop:mixture1} shows that the Cauchy mixture posterior achieves the rate $\eps_n = (\log n) n^{-1/2}$ when the true density $p^\star = p_{\theta^\star}$ is a finite Cauchy mixture.  To our knowledge, this mixture of heavy-tailed kernels has yet to be considered in Bayesian nonparametrics literature, 
but it fits quite easily into our general setup proposed here.  If bounds on the error in approximating an infinite Cauchy mixture by a finite Cauchy mixture were available, then our analysis immediately gives a rate for the more general case.  
\end{remark}

\subsection{Estimation of a sparse normal mean vector}
\label{SS:mean}

Consider inference on the mean vector $\theta = (\theta_1,\ldots,\theta_n)^\top$ of a normal distribution, $\nm_n(\theta, I_n)$, based on a single sample $X=(X_1,\ldots,X_n)$.  That is, $X_i \sim \nm(\theta_i, 1)$, for $i=1,\ldots,n$, independent.  The mean vector is assumed to be sparse in the sense that most of the components, $\theta_i$, are zero, but the locations and values of the non-zero components are unknown.  This problem was considered by \citet{martin.walker.eb} and they show that a version of the double empirical Bayes posterior contracts at the optimal minimax rate.  Here we propose an arguably simpler empirical prior and demonstrate the same asymptotic optimality of the posterior based on the general results in Section~\ref{SS:unknown}.  

Write the mean vector $\theta$ as a pair $(S, \theta_S)$, where $S \subseteq \{1,2,\ldots,n\}$ identifies the non-zero entries of $\theta$, and $\theta_S$ is the $|S|$-vector of non-zero values.  Assume that the true mean vector $\theta^\star$ has $|S_n^\star|=s_n^\star$ such that $s_n^\star = o(n)$.  The sieves $\Theta_{n,S}$ are subsets of $\RR^n$ that constrain the components of the vectors corresponding to indices in $S^c$ to be zero; no constraint on the non-zero components is imposed.  Note that we can trivially restrict to subsets $S$ of cardinality no more than $T_n = n$.  Furthermore, Condition~S2 is trivially satisfied because $\theta^\star$ belongs to the sieve $S_n^\star$ by definition, so we can take $\theta^\dagger = \theta^\star$.  

For this model, the Hellinger distance for joint densities satisfies
\[ H^2(p_{\theta^\star}^n, p_\theta^n) = 1 - e^{-\frac18 \|\theta-\theta^\star\|^2}, \]
where $\|\cdot\|$ is the usual $\ell_2$-norm on $\RR^n$.  In this sparse setting, as demonstrated by \citet{donoho1992}, the $\ell_2$-minimax rate of convergence is $s_n^\star \log(n / s_n^\star)$; we set this rate equal to $n\eps_n^2$, so that $\eps_n^2 = (s_n^\star / n) \log(n / s_n^\star)$.  Therefore, if we can construct a prior such that Conditions~LP2 and GP2 hold for this $\eps_n$, then it will follow from Theorem~\ref{thm:rate2} that the corresponding empirical Bayes posterior concentrates at the optimal minimax rate.  

Let the prior distribution $w_n$ for $S$ be given by 
\[ w_n(S) \propto \binom{n}{|S|}^{-1} e^{-g(|S|) |S|}, \quad S \subseteq \{1,2,\ldots,n\}, \]
where $g(s)$ is a non-decreasing slowly varying function as $s \to \infty$, which includes the case where $g(s) \equiv B$ for a sufficiently large constant $B$; see the proof of the proposition.  For the conditional prior for $\theta_S$, given $S$, we let 
\[ \theta_S \mid S \sim \nm_{|S|}(\hat\theta_{n,S}, \gamma^{-1} I_{|S|}), \quad \text{for any $\gamma \in (0,1)$}, \]
where the sieve MLE is $\hat\theta_{n,S} = X_S = (X_i: i \in S)$.  

\begin{prop}
\label{prop:mean}
Suppose the normal mean vector $\theta^\star$ is $s_n^\star$-sparse in the sense that only $s_n^\star = o(n)$ of the entries in $\theta^\star$ are non-zero.  For the empirical prior described above, there exists a constant $M > 0$ such that the corresponding posterior distribution $\Pi^n$, using Type~I or Type~II regularization, with any $\alpha < 1$, satisfies 
\[ \E_{\theta^\star}^n \bigl[ \Pi^n(\{\theta: \|\theta-\theta^\star\|^2 > M s_n^\star \log(n / s_n^\star)\}) \bigr] \to 0. \]
\end{prop}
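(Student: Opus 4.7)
The plan is to apply Theorem~\ref{thm:rate2} at the target rate $\eps_n^2 = (s_n^\star/n)\log(n/s_n^\star)$ and then convert the resulting Hellinger statement into an $\ell_2$ statement using the explicit identity $H^2(p_{\theta^\star}^n, p_\theta^n) = 1 - e^{-\|\theta-\theta^\star\|^2/8}$ already recorded in the excerpt. This reduces the task to verifying Conditions~S2, LP2, and GP2 for the proposed empirical prior. Condition~S2 is immediate: since $\theta^\star \in \Theta_{S_n^\star}$ by construction, I take $\theta^\dagger = \theta^\star$, the KL and $V$ terms vanish, and $|S_n^\star| = s_n^\star \leq s_n^\star \log(n/s_n^\star) = n\eps_n^2$.

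For Condition~LP2, I would first compute $\L_{n,S}$ explicitly: a short calculation in the Gaussian model shows $\log\{L_n(\theta)/L_n(\hat\theta_{n,S})\} = -\tfrac12 \|X_S - \theta_S\|^2$ on $\Theta_S$, so $\L_{n,S}$ is a Euclidean ball of squared radius $2d|S|$ around the sieve MLE $X_S$. Under the conditional prior $\theta_S \mid S \sim \nm_{|S|}(X_S,\gamma^{-1} I_{|S|})$, the rescaled quantity $\gamma\|X_S-\theta_S\|^2$ is $\chi^2_{|S|}$-distributed, so a standard Cram\'er-type lower tail bound gives $\Pi_{n,S}(\L_{n,S}) \gtrsim e^{-C|S|}$ for a constant $C=C(d,\gamma)$. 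The weight bound $w_n(S_n^\star)\gtrsim e^{-An\eps_n^2}$ follows from the elementary inequality $\binom{n}{s_n^\star}^{-1}\geq (s_n^\star/en)^{s_n^\star}$ together with finiteness of the normalizer $\sum_s e^{-g(s)s}$; the resulting exponent $s_n^\star[\log(n/s_n^\star)+1+g(s_n^\star)]$ is of order $n\eps_n^2$ because $g$ is slowly varying, $s_n^\star=o(n)$, and hence $\log(n/s_n^\star)\to\infty$.

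For Condition~GP2, the essential computation is a Gaussian convolution: as a function of $X_S$, $\pi_{n,S}(\theta_S)^p$ is proportional to a Gaussian density, and integrating against $X_S \sim \nm_{|S|}(\theta_S^\star, I_{|S|})$ yields
\[ \E_{\theta^\star}\{\pi_{n,S}(\theta_S)^p\} \propto \exp\Bigl\{-\tfrac{p\gamma}{2(p\gamma+1)}\|\theta_S-\theta_S^\star\|^2\Bigr\}. \]
Raising to the $1/p$ power and integrating over $\Theta_S$ collapses to the clean constant $(p\gamma+1)^{|S|/(2q)}=e^{\kappa|S|}$ with $\kappa=\log(p\gamma+1)/(2q)$, so the sum in \eqref{eq:gp2.sum} is dominated by $\sum_s e^{(\kappa-g(s))s}$. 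This is bounded, and hence GP2 holds with $K=0$, as soon as $\gamma$ is chosen small enough that $\kappa < g(s)$ for all large $s$; this is precisely what the ``$\gamma$ sufficiently small'' hypothesis of the proposition encodes.

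With S2, LP2, and GP2 in hand, Theorem~\ref{thm:rate2} yields $\E_{\theta^\star}\{\Pi^n(A_{M\eps_n})\}\to 0$ for $M$ large, and the Hellinger identity above converts $A_{M\eps_n}$ into $\{\theta: \|\theta-\theta^\star\|^2 > 8M^2 n\eps_n^2\}$; absorbing the factor $8M^2$ into a new constant $M$ and substituting $n\eps_n^2 = s_n^\star\log(n/s_n^\star)$ gives the claim. I expect the main obstacle to be the bookkeeping in GP2---specifically, carrying the Gaussian-moment exponent $\kappa$ through cleanly enough to pin down exactly how small $\gamma$ must be relative to the slope of $g$---since LP2 reduces to a one-line chi-square computation together with Stirling, and S2 is trivial by construction.
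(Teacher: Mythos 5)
Your proposal is correct and follows essentially the same route as the paper: S2 is trivial with $\theta^\dagger=\theta^\star$, LP2 reduces to a chi-square lower-tail bound for a ball around $X_S$ plus the $\binom{n}{s}^{-1}\geq (s/(en))^s$ estimate, GP2 reduces to the Gaussian convolution identity yielding $e^{\kappa|S|}$ with $\kappa=\log(1+p\gamma)/(2q)$, and the Hellinger-to-$\ell_2$ conversion is exactly the one recorded before the proposition. The only cosmetic difference is that the paper lower-bounds the Gaussian ball probability directly (minimum of the density on the ball times its volume, then Stirling) to obtain the explicit constraint $1-2\gamma+\log\gamma+\log 2<0$, whereas you invoke a Cram\'er-type lower tail bound for $\chi^2_{|S|}$; these are the same estimate in different clothing, and your bookkeeping of the GP2 constant $\kappa$ is actually a bit more explicit than the paper's reference back to Proposition~\ref{prop:finite.dim}.
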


\begin{proof}
See the Appendix.
\end{proof}

Note that the prior being employed in this empirical Bayes formulation is conjugate, leading to some computational savings compared to the non-conjugate priors shown to be optimal in \citet{castillo.vaart.2012} under a classical Bayesian formulation; see \citet{martin.mess.walker.eb} and \citet{martin.horseshoe.discuss} for more on computational benefits, and \citet{ebcvg} for results on coverage of credible sets based on this empirical Bayes model.  A similar approach to the one described above is considered in \citet{ebpiece} to get minimax optimal posterior concentration rates and fast computation for the case where $\theta$ is known to be piecewise constant.

\subsection{Regression function estimation}
\label{SS:np.reg}

Consider a nonparametric regression model 
\[ Y_i = f(t_i) + \sigma z_i, \quad i=1,\ldots,n, \]
where $z_1,\ldots,z_n$ are iid~$\nm(0,1)$, $t_1,\ldots,t_n$ are equi-spaced design points in $[0,1]$, i.e., $t_i = i/n$, and $f$ is an unknown function.  Following \citet{arbel.etal.sjs2013}, we consider a Fourier basis expansion for $f=f_\theta$, so that $f(t) = \sum_{j=1}^\infty \theta_j \phi_j(t)$, where $\theta=(\theta_1,\theta_2,\ldots)$ and $(\phi_1,\phi_2,\ldots)$ are the basis coefficients and functions, respectively.  They give conditions such that their Bayesian posterior distribution for $f$, induced by a prior on the basis coefficients $\theta$, concentrates at the true $f^\star$ at the minimax rate corresponding to the unknown smoothness of $f^\star$.  Here we derive a similar result, with a better rate, for the posterior derived from an empirical prior.  

Following the calculations in Section~\ref{SS:mean}, the Hellinger distance between the joint distribution of $(Y_1,\ldots,Y_n)$ for two different regression functions, $f$ and $g$, satisfies 
\[ H^2(p_f^n, p_g^n) = 1 - e^{-\frac{n}{8\sigma^2} \|f-g\|_n^2}, \]
where $\|f\|_n^2 = n^{-1} \sum_{i=1}^n f(t_i)^2$ is the squared $L_2$-norm corresponding to the empirical distribution of the covariate $t$.  So, if the conditions of Theorem~\ref{thm:rate2} are satisfied, then we get a posterior concentration rate result relative to the metric $\|\cdot \|_n$.  

Suppose that the true regression function $f^\star$ is in a Sobolev space of index $\beta > \frac12$.  That is, there is an infinite coefficient vector $\theta^\star$ such that $f^\star = f_{\theta^\star}$ and $\sum_{j=1}^\infty \theta_j^{\star 2} j^{2\beta} \lesssim 1$.  This implies that the coefficients $\theta_j^\star$ for large $j$ are of relatively small magnitude and suggests a particular formulation of the model and empirical prior.  As before, we rewrite the infinite vector $\theta$ as $(S,\theta_S)$, but this time $S$ is just an integer in $\{1,2,\ldots,n\}$, and $\theta_S = (\theta_1,\ldots,\theta_S,0,0,\ldots)$ is an infinite vector with only the first $S$ terms non-zero.  That is, we will restrict our prior to be supported on vectors whose tails vanish in this sense.  For the prior $w_n$ for the integer $S$, we take 
\[ w_n(s) \propto e^{-g(s) s}, \quad s = 1,\ldots,n, \]
where $g(s)$, is a non-decreasing slowly varying function, which includes the case of $g(s) \equiv B$ for $B$ sufficiently large; see the proof of the proposition.  Next, for the conditional prior for $\theta_S$, given $S$, note first that the sieve MLE is a least-squares estimator
\[ \hat\theta_{n,S} = (\Phi_S^\top \Phi_S)^{-1} \Phi_S^\top Y, \]
where $\Phi_S$ is the $n \times |S|$ matrix determined by the basis functions at the observed covariates, i.e., $\Phi_S = (\phi_j(t_i))_{ij}$, $i=1,\ldots,n$ and $j=1,\ldots,|S|$.  As in \citet{martin.mess.walker.eb}, this suggests a conditional prior of the form
\[ \theta_S \mid S \sim \nm_{|S|}\bigl(\hat\theta_{n,S}, \gamma^{-1} (\Phi_S^\top \Phi_S)^{-1} \bigr), \quad \text{for any $\gamma \in (0,1)$}. \]
This empirical prior for $\theta \equiv (S,\theta_S)$ induces a corresponding empirical prior for $f$ through the mapping $\theta \mapsto f_\theta$.

\begin{prop}
\label{prop:np.reg}
Suppose that the true regression function $f^\star$ is in a Sobolev space of index $\beta > \frac12$.  For the empirical prior described above, there exists a constant $M > 0$ such that the corresponding posterior distribution $\Pi^n$, using Type~I or Type~II regularization, with any $\alpha < 1$, satisfies 
\[ \E_{f^\star}^n \bigl[ \Pi^n(\{\theta: \|f_\theta-f^\star\|_n > M n^{-\beta/(2\beta + 1)} \}) \bigr] \to 0. \]
\end{prop}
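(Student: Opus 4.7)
The plan is to apply Theorem~\ref{thm:rate2} with target rate $\eps_n = n^{-\beta/(2\beta+1)}$, so that $n\eps_n^2 = n^{1/(2\beta+1)}$, and then convert the resulting Hellinger-ball statement into the desired $\|\cdot\|_n$-ball statement. The latter step is immediate from the identity $H^2(p_f^n,p_g^n) = 1 - e^{-n\|f-g\|_n^2/(8\sigma^2)}$ recorded just above the proposition: the event $H^2 > 1 - e^{-M^2 n\eps_n^2}$ is the same as $\|f-g\|_n > 2\sqrt{2}\sigma M \eps_n$. All that remains is to verify Conditions~S2, LP2, and GP2.

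For Condition~S2, I would set $|S_n^\star| = \lceil n\eps_n^2\rceil \asymp n^{1/(2\beta+1)}$ and take $\theta^\dagger$ to be $\theta^\star$ truncated to its first $|S_n^\star|$ Fourier coefficients. The Sobolev tail estimate $\sum_j \theta_j^{\star 2}j^{2\beta}\lesssim 1$ gives $\|f^\star - f_{\theta^\dagger}\|_2^2 \lesssim |S_n^\star|^{-2\beta} \asymp \eps_n^2$, and for $\beta > \tfrac12$ this transfers to the empirical norm on the equispaced grid. Since the noise is Gaussian with known $\sigma$, both $K(p_{\theta^\star}^n,p_{\theta^\dagger}^n)$ and $V(p_{\theta^\star}^n,p_{\theta^\dagger}^n)$ equal, up to a constant depending only on $\sigma$, $n\|f^\star - f_{\theta^\dagger}\|_n^2 \lesssim n\eps_n^2$.

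For Condition~LP2, the bound $w_n(S_n^\star)\gtrsim e^{-An\eps_n^2}$ follows from $|S_n^\star|\leq n\eps_n^2$ and $g$ slowly varying. For the conditional prior mass, orthogonality of the least-squares residual to the columns of $\Phi_S$ makes the log-likelihood exactly quadratic in $\theta_S$ on the sieve,
\[ \log R_n(\theta_S) - \log R_n(\hat\theta_{n,S}) = -\tfrac{1}{2\sigma^2}(\theta_S-\hat\theta_{n,S})^\top \Phi_S^\top\Phi_S(\theta_S-\hat\theta_{n,S}), \]
so $\L_{n,S}$ is a genuine ellipsoid. Under $\Pi_{n,S}$, the quadratic form above is distributed as $\gamma^{-1}\chi^2_{|S|}$, giving $\Pi_{n,S_n^\star}(\L_{n,S_n^\star})=\prob(\chi^2_{|S_n^\star|}\leq 2\sigma^2 d\gamma|S_n^\star|)$. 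Provided $\gamma$ is small enough that $2\sigma^2 d\gamma > 1$, this probability in fact tends to $1$ and so dominates $e^{-C|S_n^\star|}$ for any $C > 0$.

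Condition~GP2 is the main technical step. A direct Gaussian-density calculation yields
\[ \pi_{n,S}(\theta_S)^p \propto \det(\Phi_S^\top\Phi_S)^{(p-1)/2}\,\nm\bigl(\theta_S\mid\hat\theta_{n,S},\,(p\gamma)^{-1}(\Phi_S^\top\Phi_S)^{-1}\bigr), \]
with prefactor depending only on $p$ and $\gamma$. Because $\hat\theta_{n,S}$ is Gaussian with covariance $\sigma^2(\Phi_S^\top\Phi_S)^{-1}$ under $\prob_{\theta^\star}$, taking expectation convolves to a Gaussian in $\theta_S$ with covariance $[(p\gamma)^{-1}+\sigma^2](\Phi_S^\top\Phi_S)^{-1}$; raising to the $1/p$-power and integrating, the $\det(\Phi_S^\top\Phi_S)$ factors cancel exactly and leave $\int[\E_{\theta^\star}\pi_{n,S}^p]^{1/p}\,d\theta\leq e^{\kappa|S|}$ for some deterministic $\kappa=\kappa(p,\gamma,\sigma)$. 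The GP2 sum then reduces to $\sum_s e^{\kappa s}w_n(s)\lesssim\sum_s e^{(\kappa-g(s))s}$, which is finite provided $g$ eventually exceeds $\kappa$; this is precisely what "$\gamma$ sufficiently small" buys. The main obstacle is the exact cancellation of $\det(\Phi_S^\top\Phi_S)$: with a prior covariance not proportional to $(\Phi_S^\top\Phi_S)^{-1}$, an uncontrolled determinant factor—effectively a rogue $n^{|S|/2}$ for Fourier design—would survive and invalidate GP2, so the matched prior covariance is essential, not merely convenient.
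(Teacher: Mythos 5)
Your plan---apply Theorem~\ref{thm:rate2} with $\eps_n = n^{-\beta/(2\beta+1)}$ and convert the Hellinger statement into a $\|\cdot\|_n$-statement via the identity $H^2(p_f^n,p_g^n)=1-e^{-n\|f-g\|_n^2/(8\sigma^2)}$---is exactly the paper's approach, and your verification of S2 matches the paper's. Where the paper is terse, saying only that LP2 and GP2 ``follow almost exactly like Section~\ref{SS:mean},'' you supply the missing content and supply it correctly for GP2: the cancellation of the $\det(\Phi_S^\top\Phi_S)$ factors due to the matched prior covariance is precisely why the integral is $e^{\kappa|S|}$ with $\kappa=(\tfrac12-\tfrac1{2p})\log(1+p\gamma\sigma^2)$ independent of $n$, and your observation that a mismatched covariance would leave a rogue determinant is the right way to think about why this prior form is forced. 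Your conclusion that ``$\gamma$ sufficiently small'' is spent on GP2 (driving $\kappa$ below the exponent of the $w_n$ tail) is also the correct reading.

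Your LP2 argument, though, contains a self-contradictory step. You write ``Provided $\gamma$ is small enough that $2\sigma^2 d\gamma > 1$, this probability in fact tends to $1$,'' but decreasing $\gamma$ makes $2\sigma^2 d\gamma$ smaller, not larger, so the inequality $2\sigma^2 d\gamma>1$ cannot be achieved by taking $\gamma$ small---and you do need $\gamma$ small for GP2. In the relevant regime $a:=2\sigma^2 d\gamma<1$, the quantity $\prob(\chi^2_{|S|}\leq a|S|)$ does not tend to $1$; it decays exponentially in $|S|$. But that is fine: by a standard lower-tail $\chi^2$ bound, $\prob(\chi^2_m\leq am)\gtrsim e^{-\psi(a)m}$ with $\psi(a)=\tfrac12(a-1-\log a)>0$, which is exactly of the form $e^{-C|S_n^\star|}$ required by LP2, with $C=\psi(2\sigma^2 d\gamma)$. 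This is in substance the bound the paper derives in Section~\ref{SS:mean} by a direct volume computation and Stirling's formula. So your conclusion that LP2 holds is correct, but not via the reasoning stated; LP2 in fact holds for every fixed $\gamma\in(0,1)$, and smallness of $\gamma$ plays no role there.
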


\begin{proof}
See the Appendix.
\end{proof}

Note that the rate obtained in Proposition~\ref{prop:np.reg} is {\em exactly} the optimal minimax rate, i.e., there are no extra logarithmic factors.  This, like in Section~\ref{SS:mean}, is a consequence of $f^\star$ eventually being in the specified sieve; these extra log factors are a result of having to approximate the true parameter by an element in the sieve.  A similar result, without the additional logarithmic terms, is given in \citet{gao.zhou.2016}.

\subsection{Nonparametric density estimation}
\label{SS:mixture2}

Consider the problem of estimating a density $p$ supported on the real line.  Like in Section~\ref{SS:mixture1}, we propose a normal mixture model and demonstrate the asymptotic concentration properties of the posterior based on an empirical prior, but with the added feature that the rate is adaptive to the unknown smoothness of the true density function.  Specifically, as in \citet{kruijer.rousseau.vaart.2010}, we assume that data $X_1,\ldots,X_n$ are iid from a true density $p^\star$, where $p^\star$ satisfies the conditions C1--C4 in their paper; in particular, we assume that $\log p^\star$ is H\"older with smoothness parameter $\beta$.  They propose a fully Bayesian model---one that does not depend on the unknown $\beta$---and demonstrate that the posterior concentration rate, relative to the Hellinger distance, is $\eps_n = (\log n)^t n^{-\beta/(2\beta + 1)}$ for suitable constant $t > 0$, which is within a logarithmic factor of the optimal rate.  

Here we extend the approach presented in Section~\ref{SS:mixture1} to achieve adaptation by incorporating a prior for the number of mixture components, $S$, as well as the $S$-specific kernel variance $\sigma_S^2$ as opposed to fixing their values.  For the prior $w_n$ for $S$, we let 
\[ w_n(S) \propto e^{-D (\log S)^r S}, \quad S=1,\ldots,n, \]
where $r > 1$ and $D > 0$ are specified constants.  Given $S$, we consider a mixture model with $S$ components of the form
\[ p_{S,\theta_S}(\cdot) = \sum_{s=1}^S \omega_{s,S} \, \nm(\cdot \mid \mu_{s,S}, \lambda_S^{-1}), \]
where $\theta_S = (\omega_S, \mu_S, \lambda_S)$, $\omega_S=(\omega_{1,S},\ldots,\omega_{S,S})$ is a probability vector in $\Delta(S)$, $\mu_S = (\mu_{1,S},\ldots,\mu_{S,S})$ is a $S$-vector of mixture locations, and $\lambda_S$ is a precision (inverse variance) that is the same in all the kernels for a given $S$.  We can fit this model to data using, say, the EM algorithm, and produce a given $S$ sieve MLE: $\hat{\omega}_S = (\hat \omega_{1,S},\ldots,\hat \omega_{S,S})$, $\hat \mu_S = (\hat \mu_1,\ldots, \hat \mu_S)$, and $\hat\lambda_S$.  Following our approach in Section~\ref{SS:mixture1}, consider an empirical prior for $\omega_S$ obtained by taking 
\[ \omega_S \mid S \sim \dir_S(\hat\alpha_S) \]
where $\hat\alpha_{s,S} = 1 + c \hat \omega_{s,S}$ and $c=c_S$ is to be determined.  The prior for $\mu_S$ follows the same approach as in Section~\ref{SS:mixture1}, i.e., 
\[ \mu_{S,s} \sim \unif(\hat \mu_{S,s} - \delta, \hat \mu_{S,s} + \delta), \quad s=1,\ldots,S, \quad \text{independent}, \]
where $\delta=\delta_S$ is to be determined.  The prior for $\lambda_S$ is also uniform,
\[ \lambda_S \sim \unif(\hat\lambda_S(1-\psi),\hat\lambda_S(1+\psi)), \]
where $\psi=\psi_S$ is to be determined.  Also, as with $\hat{\mu}_S$ being restricted to the interval $(-B,+B)$, we restrict the $\hat\lambda_S$ to lie in $(B_l,B_u)$, to be determined.  
Then we get a prior on the density function through the mapping $(S,\theta_S) \mapsto p_{S,\theta_S}$.  For this choice of empirical prior, the following proposition shows that the corresponding posterior distribution concentrates around a suitable true density $p^\star$ at the optimal rate, up to a logarithmic factor, exactly as in \citet{kruijer.rousseau.vaart.2010}.  

\begin{prop}
\label{prop:mixture2}
Suppose that the true density $p^\star$ satisfies Conditions C1--C4 in \citet{kruijer.rousseau.vaart.2010}, in particular, $\log p^\star$ is H\"older continuous with smoothness parameter $\beta$.  For the empirical prior described above, if $B = (\log n)^2$, $B_l = n^{-1}$, $B_u = n^{b-2}$, and, for each $S$, $c=c_s = n^2 S^{-1}$, $\delta=\delta_S = S^{1/2} n^{-(b + 3/2)}$, and $\psi = \psi_S = S n^{-1}$, for a sufficiently large $b > 2$, then there exists constants $M > 0$ and $t > 0$ such that the corresponding posterior distribution $\Pi^n$, using Type~I or Type~II regularization, with any $\alpha < 1$, satisfies 
\[ \E_{p^\star}^n \bigl[ \Pi^n(\{\theta: H(p^\star, p_\theta) > M (\log n)^t n^{-\beta/(2\beta + 1)} \}) \bigr] \to 0. \]
\end{prop}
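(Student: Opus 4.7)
The plan is to invoke Theorem~\ref{thm:rate2} by verifying Conditions~S2, LP2, and GP2 for the proposed empirical prior at the target rate $\eps_n = (\log n)^t n^{-\beta/(2\beta+1)}$, for some $t > 0$ to be fixed by the calculations. As in Section~\ref{SS:mixture1}, the parameter decomposes as $\theta = (S, \theta_S)$ with $\theta_S = (\omega_S, \mu_S, \lambda_S)$, and the conditional prior density factors over these three blocks.

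For Condition~S2, I would quote the approximation-theoretic results of \citet{kruijer.rousseau.vaart.2010}: under C1--C4, there is an integer $S_n^\star \asymp (n/\log n)^{1/(2\beta+1)}$ and a parameter $\theta_n^\dagger = (\omega_n^\dagger, \mu_n^\dagger, \lambda_n^\dagger) \in \Theta_{S_n^\star}$ such that the $S_n^\star$-component mixture $p_{S_n^\star, \theta_n^\dagger}$ approximates $p^\star$ well enough that both $K(p^{\star n}, p^n_{S_n^\star, \theta_n^\dagger})$ and $V(p^{\star n}, p^n_{S_n^\star, \theta_n^\dagger})$ are bounded by $n\eps_n^2$, with $|S_n^\star| \leq n\eps_n^2$. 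This places $\theta_n^\dagger$ inside the support of the empirical prior provided $B, B_l, B_u$ are large enough to contain the relevant locations and precisions for all sufficiently large $n$, which is why $B = (\log n)^2$, $B_l = n^{-1}$, $B_u = n^{b-2}$ are chosen.

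For Condition~LP2, the prior weight on $S_n^\star$ satisfies $w_n(S_n^\star) \gtrsim e^{-D(\log S_n^\star)^r S_n^\star}$, and since $(\log S_n^\star)^r S_n^\star \lesssim (\log n)^r (n/\log n)^{1/(2\beta+1)} \lesssim n\eps_n^2$ for $t$ chosen with $2t \geq r$, the lower bound $w_n(S_n^\star) \gtrsim e^{-A n\eps_n^2}$ holds. For the local mass bound on $\L_{n,S_n^\star}$, I would use the quadratic expansion of the mixture log-likelihood around the sieve MLE to show that $\L_{n,S_n^\star}$ contains a product of small neighborhoods for $\omega$, $\mu$, $\lambda$ of radii that are polynomial in $S$ and $n^{-1}$. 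The Dirichlet density at a point within an $O(n^{-1} S^{1/2})$-neighborhood of $\hat\omega_{S_n^\star}$ with concentration parameter $c_S = n^2/S$ is then bounded below by arguments analogous to Proposition~\ref{prop:histogram}, and the uniform priors on $\mu_S$ (radius $\delta_S = S^{1/2}n^{-(b+3/2)}$) and $\lambda_S$ (radius $\psi_S = Sn^{-1}\hat\lambda$) contribute factors whose logarithms are $O(S \log n)$, giving the needed bound $\Pi_{n,S_n^\star}(\L_{n,S_n^\star}) \gtrsim e^{-C S_n^\star}$ after the choice of $b$.

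For Condition~GP2, independence of the three blocks makes the integrand factor. Uniform priors yield $\int [\E\,\unif^p]^{1/p} \asymp \delta_S^{-(p-1)S/p}$ and $\psi_S^{-(p-1)/p}$, i.e., $\log$-contributions of order $S \log n$. The Dirichlet integral is bounded by $e^{O(S \log n)}$ via the same convexity/conjugate calculation as in the toy example (the dependence on the data-driven $\hat\omega$ is absorbed into the $p$th moment since $\hat\omega \in \Delta(S)$). Thus the inner integral is at most $e^{\kappa S \log n}$ for some $\kappa$, and the sum becomes
\[ \sum_{S=1}^n e^{\kappa S \log n} e^{-D(\log S)^r S} \lesssim e^{K n\eps_n^2}, \]
provided $r > 1$ and $D$ is large enough, since the dominant term is at $S \asymp S_n^\star$ where $(\log S)^r S$ is of order $n\eps_n^2$ for $t$ adequately large.

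The main obstacle is the LP2 estimate: mixture likelihoods are notoriously non-identifiable, so verifying that $\L_{n,S_n^\star}$ actually contains a product of balls of the sizes dictated by $\delta_S, \psi_S, c_S$ requires a careful quadratic expansion of $\ell_n$ around $\hat\theta_{n,S}$ that behaves well uniformly in $S \leq T_n$. Everything else is bookkeeping, and the separation between $w_n$'s penalty $e^{-D(\log S)^r S}$ and the per-$S$ moment factor $e^{\kappa S \log n}$ (the former dominates at $S \gg S_n^\star$) is what ultimately determines the admissible range of $t$ and yields the adaptive rate.
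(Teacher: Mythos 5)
Your high-level plan (invoke Theorem~\ref{thm:rate2} by checking S2, LP2, GP2 at $\eps_n = (\log n)^t n^{-\beta/(2\beta+1)}$) and your S2 and $w_n(S_n^\star)$ arguments match the paper. But the two remaining pieces diverge, and the first divergence is a real gap.

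For the local mass part of LP2, you propose a ``quadratic expansion of the mixture log-likelihood around the sieve MLE'' to show that $\L_{n,S_n^\star}$ contains a product of balls, and you yourself flag this as ``the main obstacle'' because of mixture non-identifiability and the need for uniformity in $S$. This obstacle is real and your proposal does not resolve it: the Hessian of a mixture log-likelihood at the MLE can be degenerate or ill-conditioned, and there is no clean way to translate the likelihood neighborhood $\L_{n,S}$ into a product of coordinate balls. The paper's argument avoids this entirely. As in Proposition~\ref{prop:mixture1}, one computes the prior expectation $\E\{L_n(\theta_S)\}$ directly: for the Dirichlet block use the Gamma-function identity, for the $\mu$ block use Jensen's inequality $\E(e^Z) \geq e^{\E(Z)}$ applied to the log of a product of Gaussian kernels, and for the new $\lambda$ block use the uniform prior to get $\E\lambda = \hat\lambda$ and $\E\log\lambda \geq \log\hat\lambda - z\psi$. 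This yields $\E\{L_n(\theta_S)\} \geq \frac{\Gamma(c+S)c^n}{\Gamma(c+S+n)} e^{-n\delta^2\hat\lambda/6} e^{-nz\psi} L_n(\hat\theta_S)$, and the reverse Markov inequality then converts this into a lower bound on $\Pi_{n,S}(\L_{n,S})$. No quadratic expansion, no identifiability concerns.

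Your GP2 argument also differs from the paper's, and the stated justification is shaky. You let the per-$S$ factor grow like $e^{\kappa S\log n}$ and rely on $w_n(S)\propto e^{-D(\log S)^r S}$ to tame it, asserting ``the dominant term is at $S\asymp S_n^\star$.'' That is not where the maximum of $S[\kappa\log n - D(\log S)^r]$ sits; the maximizer is at $\log S \approx (\kappa\log n/D)^{1/r}$, which for $r>1$ is much smaller than $\log S_n^\star \asymp \log n$. The sum can still be controlled, but your reasoning for why is wrong. The paper instead works to make the per-$S$ integral bounded \emph{uniformly in $S$} (up to a factor $e^{O(\log n)}$) by tying $\delta_S$, $\psi_S$, $B$, $B_l$, $B_u$ to the $e^{bS\log n}$ Dirichlet factor, so that $\sum_S w_n(S)\cdot(\text{per-}S\text{ integral}) \lesssim e^{O(\log n)} \sum_S w_n(S) = e^{O(\log n)}$, which is trivially $\lesssim e^{Kn\eps_n^2}$. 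Your route may be repairable, but as written the dominant-term claim is incorrect, and the paper's uniform-in-$S$ bound is the cleaner and intended path.
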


\begin{proof}
See the Appendix.
\end{proof}

\section{Conclusion}
\label{S:discuss}

This paper considers the construction of an empirical or data-dependent prior such that, when combined with the likelihood via Bayes's formula, gives a posterior distribution with desirable asymptotic concentration properties.  The details vary a bit depending on whether the complexity of the true $\theta^\star$ is known to the user or not (Sections~\ref{SS:known}--\ref{SS:unknown}), but the basic idea is to first choose a suitable sieve and then center the prior for the sieve parameters on the sieve MLE.  This makes establishing the necessary local prior support condition and lower-bounding the posterior denominator straightforward, which is a major obstacle in the standard Bayesian nonparametric setting.  Having the data involved in the prior complicates the usual argument to upper-bound the posterior numerator, but compared to the usual global prior conditions involving entropy, here we only need to suitably control the spread of the empirical prior.  The end result is a data-dependent measure that achieves a certain---often optimal---concentration rate, adaptively, if necessary.  

The approach presented here is quite versatile, so there are many potential applications beyond those examples studied here.  A more general question to be considered in a follow-up work, one that has attracted a lot of attention in the Bayesian nonparametric community recently, concerns the coverage probability of credible regions derived from our empirical Bayes posterior distribution.  Having suitable concentration rates is an important first step, but coverage properties will require new insights.  The theoretical results presented in \citet{ebcvg} for the sparse normal means problem and the numerical results in \citet{ebmono} for the monotone density estimation are both promising, but more work is needed.

\section*{Acknowledgments}

The authors are grateful to the associate editor and anonymous referees for their detailed comments and suggestions. This work is partially supported by the U.~S.~National Science Foundation, grants DMS--1506879 and DMS--1737933.

\appendix

\section{Details for the examples}

\subsection{Proof of Proposition~\ref{prop:finite.dim}}

For Condition~LP1, under the proposed normal prior, we have
\[ \Pi_n(\L_n) = \int_{n\,(\theta - \hat\theta_n)^\top \Psi (\theta - \hat\theta_n) < a} \nm\bigl(\theta \mid \hat\theta_n, n^{-1}\Psi^{-1} \bigr) \,d\theta. \]
Making a change of variable, $z = n^{1/2}\Psi^{1/2} (\theta - \hat\theta_n)$, the integral above can be rewritten as 
\[ \Pi_n(\L_n) = \int_{\|z\|^2 < a} \frac{1}{(2\pi)^{d/2}}  e^{-\frac{1}{2} \|z\|^2} \,dz, \]
and, therefore, $\Pi_n(\L_n)$ is lower-bounded by 
a constant not depending on $n$ so $\Pi_n(\L_n)$ is bounded away from zero; hence Condition~LP1 holds with $\eps_n = n^{-1/2}$.  For Condition~GP1, write the prior as $\theta\sim \nm_d(\hat\theta_n,n^{-1}\Psi^{-1})$ and the asymptotic distribution of the MLE as $\hat\theta \sim \nm_d(\theta^\star,n^{-1}\Sigma^{\star-1})$, where $\Sigma^\star$ is the Fisher information matrix evaluated at $\theta^\star$. Then we have,  
$$\pi_n(\theta)^p \propto |pn\Psi|^{-1/2}|n\Psi|^{p/2} \nm_d(\theta \mid \hat\theta_n, (pn\Psi)^{-1}).$$
Thus
$$\E_{\theta^\star}\{\pi_n(\theta)^p\} \propto |pn\Psi|^{-1/2}|n\Psi|^{p/2}\,\nm_d\bigl(\theta \mid \theta^\star,(pn\Psi)^{-1}+n^{-1}\Sigma^{\star-1}\bigr)$$
and so
$$\int \bigl[ \E_{\theta^\star}\{\pi_n(\theta)^p\}\bigr]^{\frac1p}\,d\theta \propto
|I_d + p \Psi \Sigma^{\star-1}|^{\frac12 - \frac{1}{2p}}. 
$$
As long as $\Psi$ is non-singular, the right-hand side above is not dependent on $n$ and is finite,  
which implies we can take $\eps_n = n^{-1/2}$.  It follows from Theorem~\ref{thm:rate1} that the Hellinger rate is $\eps_n=n^{-1/2}$ and, since all metrics on the finite-dimensional $\Theta$ are equivalent, the same rate obtains for any other metric.  

We should highlight the result 
that the integral involved in checking Condition~GP1 is at most exponential in the dimension of the parameter space:
\begin{equation}\label{eq:exp:dim}
 \int \bigl[ \E_{\theta^\star}\{\pi_n(\theta)^p\}\bigr]^{\frac1p}\,d\theta \leq e^{\kappa d}, \quad \kappa > 0. 
 \end{equation}
This result will be useful in the proof of some of the other propositions.

\subsection{Proof of Proposition~\ref{prop:histogram}}

We start by verifying Condition~LP1. Note that, for those models in the support of the prior, the data are multinomial, so the likelihood function is 
\[ L_n(\theta) = \theta_1^{n_1} \cdots \theta_S^{n_S}, \]
where $(n_1,\ldots,n_S)$ are the bin counts, i.e., $n_s = |\{i: X_i \in E_s\}|$, $s=1,\ldots,S$.  Taking expectation with respect to $\theta \sim \dir_S(\hat\alpha)$ gives 
\begin{align*}
\E(\theta_1^{n_1} \cdots \theta_S^{n_S}) & = \frac{\Gamma(c+S)}{\Gamma(c+S+n)} \prod_{s=1}^S \frac{\Gamma(n_s + 1 + c \hat\theta_s)}{\Gamma(1 + c\hat\theta_s)} \\ 
& = \frac{\Gamma(c+S)}{\Gamma(c+S+n)} \prod_{s=1}^S \prod_{k=1}^{n_s} (k + c \hat\theta_s) \\
& \geq \frac{\Gamma(c+S)}{\Gamma(c+S+n)} \prod_{s=1}^S (1+c\hat \theta_s)^{n_s} \\
& \geq \frac{\Gamma(c+S) \, c^n}{\Gamma(c+S+n)} \prod_{s=1}^S \hat \theta_s^{n_s}.
\end{align*}
Therefore, 
\begin{equation}
\label{eq:hist.lr.bound}
\E\{L_n(\theta)\} \geq \frac{\Gamma(c+S) \, c^n}{\Gamma(c+S+n)} L_n(\hat\theta). 
\end{equation}
Next, a simple ``reverse Markov inequality''  says, for any random variable $Y \in (0,1)$,  
\begin{equation}
\label{eq:reverse}
\prob(Y > a) \geq \frac{\E(Y) - a}{1-a}, \quad a \in (0,1). 
\end{equation}
Recall that $\L_n = \{\theta \in \Theta_n: L_n(\theta) > e^{-d n\eps_n^2} L_n(\hat\theta)\}$ as in \eqref{eq:Ln}, so we can apply \eqref{eq:reverse} to get 
\[ \Pi_n(\L_n) \geq\frac{\E\{L_n(\theta)\}/L_n(\hat\theta)-e^{-d n\eps_n^2}}{1-e^{-d n\eps_n^2}}. \]
Then it follows from \eqref{eq:hist.lr.bound} that 
\[ \Pi_n(\L_n) \geq \frac{\Gamma(c+S) \, c^n}{\Gamma(c+S+n)} - e^{-d n\eps_n^2} \]
and, therefore, Condition~LP1 is satisfied, with $C > d$, if 
\begin{equation}
\label{eq:hist.cond1}
\frac{\Gamma(c + S + n)}{\Gamma(c + S) c^n} \leq e^{d n \eps_n^2}.  
\end{equation}
Towards this, we have
\[ \frac{\Gamma(c + S + n)}{\Gamma(c + S) \, c^n} = \prod_{j=1}^n \Bigl(1 + \frac{S + j }{c} \Bigr)  \leq \Bigl(1 + \frac{S+n+1}{c} \Bigr)^n. \]
So, if $c=n\eps_n^{-2}$ as in the proposition statement, then the right-hand side above is upper-bounded by $e^{n\eps_n^2 (1 + S/n)}$.  Since $S \leq n$, \eqref{eq:hist.cond1} holds for, say, $d > 2$, hence, Condition~LP1.  

Towards Condition~GP1, note that the Dirichlet component for $\theta$ satisfies 
\[ \dir_S(\theta \mid \hat\alpha) \leq \dir_S(\hat \theta \mid \hat\alpha) \approx (c + S)^{c+S+1/2} \prod_{s: n_s>0} \frac{1}{(1 + c \hat \theta_s)^{c \hat \theta_s + 1/2}} \, \hat \theta_s^{c \hat \theta_s}, \]
where the ``$\approx$'' is by Stirling's formula, valid for all $n_s>0$ due to the value of $c$.  This has a uniform upper bound:
\[ \dir_S(\theta \mid \hat\alpha) \leq \frac{(c + S)^{c + S + 1/2}}{c^c}, \quad \forall \; \theta \in \Delta(S). \]
Then Condition~GP1 holds if we can bound the product of this and $\Gamma(S)^{-1}$, the volume of $\Delta(S)$, by $e^{K n \eps_n^2}$ for a constant $K > 0$.  Using Stirling's formula again, and the fact that $c/S \to \infty$, we have 
\[ \frac{(c+S)^{c+S+1/2}}{c^{c+S/2} \,\Gamma(S)} = \frac{S^{S+1/2}}{c^{S/2} \, \Gamma(S)} \Bigl(1 + \frac{S}{c} \Bigr)^c \Bigl(1 + \frac{c}{S} \Bigr)^{S+1/2} \leq e^{K' S \log(1+c/S)}, \quad K' > 0. \]
We need $S\log(1+c/S)\leq n\eps_n^2$.  Since $c/S \ll n^2$, the logarithmic term is $\lesssim \log n$.  But we assumed that $S \leq n\eps_n^2(\log n)^{-1}$, so the product is $\lesssim n \eps_n^2$, proving Condition~GP1.  

It remains to check Condition~S1.  A natural candidate for the pseudo-true parameter $\theta^\dagger$ in Condition~S1 is one that sets $\theta_s$ equal to the probability assigned by the true density $p^\star$ to $E_s$.  Indeed, set 
\[ \theta_s^\dagger = \int_{E_s} p^\star(x) \,dx, \quad s=1,\ldots,S. \]
It is known \citep[e.g.,][p.~93]{scricciolo2015} that, if $p^\star$ is $\beta$-H\"older, with $\beta \in (0,1]$, then the sup-norm approximation error of $p_{\theta^\dagger}$ is 
\[ \|p^\star - p_{\theta^\dagger}\|_\infty \lesssim S^{-\beta}. \]
Since $p^\star$ is uniformly bounded away from 0, it follows from Lemma~8 in \citet{ghosalvaart2007} that $\max\{K(p^\star, p_{\theta^\dagger}),V(p^\star, p_{\theta^\dagger})\} \lesssim H^2(p^\star, p_{\theta^\dagger})$ which, in turn, is upper-bounded by $S^{-2\beta}$ by the above display.  Therefore, we need $S=S_n$ to satisfy $S^{-\beta} \leq \eps_n$, and this is achieved by choosing $S=n\eps_n^2 (\log n)^{-1}$ as in the proposition.  This establishes Condition~S1, completing the proof.

\subsection{Proof of Proposition~\ref{prop:mixture1}}

We start by verifying Condition~LP1.  Towards this, we first note that, for mixtures in the support of the prior, the likelihood function is 
\[ L_n(\theta)=\prod_{i=1}^n \sum_{s=1}^S \omega_s \, k(X_i \mid \mu_s), \quad \theta = (\omega, \mu), \]
which can be rewritten as
\begin{equation}
\label{eq:mix.lik}
L_n(\theta)=\sum_{(n_1,\ldots,n_S)} \omega_1^{n_1} \cdots \omega_S^{n_S} \sum_{(s_1,\ldots,s_n)}\prod_{s=1}^S \prod_{i: s_i = s} k(X_i \mid \mu_s), 
\end{equation}
where the first sum is over all $S$-tuples of non-negative integers $(n_1,\ldots,n_S)$ that sum to $n$, the second sum is over all $n$-tuples of integers $1,\ldots,S$ with $(n_1,\ldots,n_S)$ as the corresponding frequency table, and $k(x \mid \mu) = \nm(x \mid \mu, \sigma^2)$ for known $\sigma^2$.  We also take the convention that, if $n_s = 0$, then the product $\prod_{i: s_i = s}$ is identically 1.  Next, since the prior has $\omega$ and $\mu$ independent, we only need to bound  
\[ \E(\omega_1^{n_1} \cdots \omega_S^{n_S}) \quad \text{and} \quad \E\Bigl\{ \prod_{s=1}^S \prod_{i: s_i = s} k(X_i \mid \mu_s) \Bigr\} \]
for a generic $(n_1,\ldots,n_S)$.  The first expectation is with respect to the prior for $\omega$ and can be handled exactly like in the proof of Proposition~\ref{prop:histogram}.  For the second expectation, which is with respect to the prior for the $\mu$, since the prior has the components of $\mu$ independent, we have 
\[ \E\Bigl\{ \prod_{s=1}^S \prod_{i:s_i=s} k(X_i \mid \mu_s) \Bigr\} = \prod_{s=1}^S \E\Bigl\{ \prod_{i: s_i=s} k(X_i \mid \mu_s) \Bigr\}, \]
so we can work with a generic $s$.  Writing out the product of kernels, we get 
\[ \E\Bigl\{ \prod_{i: s_i=s} k(X_i \mid \mu_s) \Bigr\} = \Bigl( \frac{1}{2\pi \sigma^2} \Bigr)^{n_s/2} e^{-\frac{1}{2\sigma^2} \sum_{i: s_i=s} (X_i - \Xbar)^2} \E\bigl\{e^{-\frac{n_s}{2\sigma^2}(\mu_s - \Xbar)^2} \}. \]
By Jensen's inequality, i.e., $\E(e^Z) \geq e^{\E(Z)}$, the expectation on the right-hand side is lower bounded by
\[ e^{-\frac{n_s}{2\sigma^2}\E(\mu_s - \Xbar)^2} = e^{-\frac{n_s}{2\sigma^2}\{v_n + (\hat \mu_s - \Xbar)^2\}}, \]
where $v_n = \delta_n^2/3$ is the variance of $\mu_s \sim \unif(\hat \mu_s - \delta_n, \hat \mu_s + \delta_n)$. This implies 
\begin{equation}\label{jenn}
\E\Bigl\{ \prod_{s=1}^S \prod_{i: s_i=s} k(X_i \mid \mu_s) \Bigr\} \geq e^{-\frac{n v_n}{2\sigma^2}} \prod_{s=1}^S \prod_{i:s_i=s} k(X_i \mid \hat \mu_s). 
\end{equation}
Putting the two expectations back together, from \eqref{eq:mix.lik} we have that 
\begin{equation}
\label{eq:mixture.lr.bound}
\E\{L_n(\theta)\} \geq \frac{\Gamma(c+S) \, c^n}{\Gamma(c+S+n)} e^{-\frac{n v_n}{2\sigma^2}} L_n(\hat\theta) 
\end{equation}
where now the expectation is with respect to both priors.
Recall that $\L_n = \{\theta \in \Theta_n: L_n(\theta) > e^{-d n\eps_n^2} L_n(\hat\theta)\}$ as in \eqref{eq:Ln}, and define $\L_n' = \{\theta \in \L_n: L_n(\theta) \leq L_n(\hat\theta_n)\}$.  Since, $\L_n \supseteq \L_n'$ and, for $\theta \in \L_n'$, we have $L_n(\theta)/L_n(\hat\theta_n) \leq 1$, we can apply the reverse Markov inequality \eqref{eq:reverse} again to get 
\[ \Pi_n(\L_n) \geq\frac{\E\{L_n(\theta)\}/L_n(\hat\theta)-e^{-d n\eps_n^2}}{1-e^{-d n\eps_n^2}}. \]
Then it follows from \eqref{eq:mixture.lr.bound} that 
\[ \Pi_n(\L_n) \geq \frac{\Gamma(c+S) \, c^n}{\Gamma(c+S+n)}e^{-\frac{n v_n}{2\sigma^2}} - e^{-d n\eps_n^2} \]
and, therefore, Condition~LP1 is satisfied if 
\begin{equation}
\label{eq:mixture.cond1}
\frac{n v_n}{2\sigma^2} \leq b n \eps_n^2 \quad \text{and} \quad \frac{\Gamma(c + S + n)}{\Gamma(c + S) c^n} \leq e^{a n \eps_n^2},  
\end{equation}
where $a + b < d$.  The first condition is easy to arrange; it requires that  
\[ v_n \leq 2b \sigma^2 \eps_n^2 \iff \delta_n \leq (6 b \sigma^2)^{1/2} \eps_n, \]
which holds by assumption on $\delta_n$.  The second condition holds with $a=2$ by the argument presented in the proof of Proposition~\ref{prop:histogram}.  Therefore, Condition~LP1 holds.  

Towards Condition~GP1, putting together the bound on the Dirichlet density function in the proof of Proposition~\ref{prop:histogram} and the following bound on the uniform densities, 
\[ \prod_{s=1}^S \unif(\mu_s \mid \hat \mu_s - \delta_n, \hat \mu_s + \delta_n) \leq \Bigl( \frac{1}{2\delta_n} \Bigr)^S \prod_{s=1}^S I_{[-B_n - \delta_n, B_n + \delta_n]}(\mu_s), \]
we have that, for any $p > 1$, 
\[ \int_{\Theta_n} \bigl[ \E_{\theta^\star}\{\pi_n(\theta)^p \} \bigr]^{1/p} \,d\theta \leq \frac{(c + S)^{c + S + 1/2}}{c^c \, \Gamma(S)} \cdot \Bigl( \frac{1}{2\delta_n} \Bigr)^S \{ 2(B_n + \delta_n) \}^S. \]
Then Condition~GP1 holds if we can make both terms in this product to be like $e^{K n \eps_n^2}$ for a constant $K > 0$.  The first term in the product, coming from the Dirichlet part, is handled just like in the proof of Proposition~\ref{prop:histogram} and, for the second factor, we have 
\[ \Bigl( \frac{1}{2\delta_n} \Bigr)^S \{ 2(B_n + \delta_n) \}^S \leq e^{S \log(1 + \frac{B_n}{\delta_n})}. \]
Since $\delta_n \propto \eps_n$ and $B_n \propto \log^{1/2}(\eps_n^{-1})$, we have  $B_n / \delta_n \propto n^{1/2}$, so the exponent above is $\lesssim S \log n \lesssim n\eps_n^2$.  This takes care of the second factor, proving Condition~GP1.  

Finally, we refer to Section~4 in \citet{ghosalvaart2001} where they show that there exists a finite mixture, characterized by $\theta^\dagger$, with $S$ components and locations in $[-B_n,B_n]$, such that $\max\{ K(p_{\theta^\star}, p_{\theta^\dagger}), \, V(p_{\theta^\star}, p_{\theta^\dagger})\} \leq \eps^2$.  This $\theta^\dagger$ satisfies our Condition~S1, so the proposition follows from Theorem~\ref{thm:rate1}.   

In the context of Remark~\ref{re:kernel}, when the normal kernel is replaced by a Cauchy kernel, we need to verify \eqref{jenn} in order to meet LP1.  To this end, let us start with 
$$\E \exp\left[-\log \prod_{s_i=s} \bigl\{ 1+(X_i-\mu_s)^2/\sigma^2 \bigr\} \right]$$
where the expectation is with respect to the prior for the $\mu_s$ and the $\sigma$ is assumed known.
This expectation is easily seen to be lower-bounded by
$$\exp\left\{-\sum_{s_i=s}\log [1+\E(X_i-\mu_s)^2/\sigma^2]\right\}=
\exp\left\{-\sum_{s_i=s}\log [1+(X_i-\hat{\mu}_s)^2/\sigma^2+v_n/\sigma^2]\right\}.$$
The right-hand term term can be written as
$$\left\{\prod_{s_i=s}\frac{1}{1+(X_i-\hat{\mu}_s)^2/\sigma^2}\right\}\,\frac{1}{\prod_{s_i=s} \left(1+\frac{v_n/\sigma^2}{1+(X_i-\hat{\mu}_s)^2/\sigma^2}\right)}$$
and the second term here is lower-bounded by $\exp(-n_s\,v_n/\sigma^2)$.  Therefore, Condition LP1 holds with the same $\eps_n$ as in the normal case.  

Condition GP1 in this case does not depend on the form of the kernel, whether it be normal or Cauchy.  And S1 is satisfied if we assume the true density $p^\star=p_{\theta^\star}$ is a finite mixture of densities, for example, the Cauchy.  This proves the claim in Remark~\ref{re:kernel}, namely, that the empirical Bayes posterior, based on a Cauchy kernel, concentrates at the rate $\eps_n = (\log n) n^{-1/2}$ when the true density is a finite Cauchy mixture.

\subsection{Proof of Proposition~\ref{prop:mean}}

The proportionality constant depends on $n$ (and $g$) but it is bounded away from zero and infinity as $n \to \infty$ so can be ignored in our analysis.  Here we can check the second part of Condition~LP2.  Indeed, for the true model $S_n^\star$ of size $s_n^\star$, using the inequality $\binom{n}{s} \leq (e n / s)^s$, we have 
\[ w_n(S_n^\star) \propto \binom{n}{s_n^\star}^{-1} e^{-B s_n^\star} \geq e^{-[B + 1 + \log(n/s^\star)] s_n^\star} \]
and, since $n\eps_n^2 = s_n^\star \log(n / s_n^\star)$, the second condition in Condition~LP2 holds for all large $n$ with $A > 1$.  Next, for Condition~GP2, note that the prior $w_n$ given above corresponds to a hierarchical prior for $S$ that starts with a truncated geometric prior for $|S|$ and then a uniform prior for $S$, given $|S|$.  Then it follows directly that Condition~GP2 on the marginal prior for $|S|$ is satisfied.

For Condition~LP2, we first write the likelihood ratio for a generic $\theta \in \Theta_S$:
\[ \frac{L_n(\theta)}{L_n(\hat\theta_{n,S})} = e^{-\frac12 \|\theta_S - \hat\theta_{n,s}\|^2}. \]
Therefore, $\L_{n,S} = \{\theta \in \Theta_S: \frac12 \|\theta - \hat\theta_{n,S}\|^2 < |S|\}$.  This is just a ball in $\RR^{|S|}$ so we can bound the Gaussian measure assigned to it.  Indeed, 
\begin{align*}
\Pi_n(\L_{n,S}) & = \int_{\|z\|^2 < 2|S|} (2\pi)^{-d/2} \gamma^{d/2} e^{-\frac{\gamma}{2} \|z\|^2} \,dz \\
& > (2\pi)^{-|S|/2} \gamma^{|S|/2} e^{-\gamma|S|} \frac{\pi^{|S|/2}}{\Gamma(\frac{|S|}{2} + 1)} (2|S|)^{|S|/2} \\
& = \gamma^{|S|/2} e^{-\gamma |S|} \frac{1}{\Gamma(\frac{|S|}{2} + 1)} |S|^{|S|/2}.
\end{align*} 
Stirling's formula gives an approximation of the lower bound:
\[ e^{-\gamma |S|} \gamma^{|S|/2} 2^{|S|/2} e^{|S|/2}\Bigl( \frac{|S|/2}{2\pi} \Bigr)^{1/2}. \]
For moderate to large $|S|$, the above display is $\gtrsim \exp\bigl\{\bigl( 1 - 2\gamma + \log\gamma + \log 2 \bigr) \tfrac{|S|}{2} \bigr\}$ and, therefore, plugging in $S_n^\star$ for the generic $S$ above, we see that Condition~LP2 holds if $1-2\gamma+\log\gamma+\log 2<0$.  For Condition~GP2, the calculation is similar to that in the finite-dimensional case handled in Proposition~\ref{prop:finite.dim}.  Indeed, the last part of the proof showed that, for a $d$-dimensional normal mean model with covariance matrix $\Sigma^{-1}$ and a normal empirical prior of with mean $\hat\theta_n$ and covariance matrix proportional to $\Sigma^{-1}$, then the integral specified in the second part of Condition~GP2 is exponential in the dimension $d$.  In the present case, we have that  
\[ \int_{\Theta_S} \bigl[ \E_{\theta^\star}\{\pi_{n,S}(\theta)^p\}\bigr]^{\frac1p}\,d\theta = e^{\kappa |S|} \]
for some $\kappa>0$ and then, clearly, Condition~GP2 holds with $K =\kappa$. 
If we take $B$ in the prior $w_n$ for $S$ to be larger than this $K$, then the conditions of Theorem~\ref{thm:rate2} are met with $\eps_n^2 = (s_n^\star / n) \log(n / s_n^\star)$.  


\subsection{Proof of Proposition~\ref{prop:np.reg}}

By the choice of marginal prior for $S$ and the normal form of the conditional prior for $\theta_S$, given $S$, Conditions~LP2 and GP2 follow almost exactly like in the proof of Proposition~\ref{prop:mean}.  Indeed, the second part of Condition~GP2 holds with $K$ the same as was derived above.  Therefore, we have only to check Condition~S2. Let $p_\theta$ denote the density corresponding to regression function $f=f_\theta$.  If $\theta^\star$ is the coefficient vector in the basis expansion of $f^\star$, then it is easy to check that 
\[ K(p_{\theta^\star}^n, p_{\theta_S^\star}^n) = \frac{n}{2\sigma^2} \|\theta^\star - \theta_S^\star\|^2 = \frac{n}{2\sigma^2} \sum_{j > |S|} \theta_j^{\star 2}. \]
If $f^\star$ is smooth in the sense that it belongs to a Sobolev space indexed by $\beta > \tfrac12$, i.e., the basis coefficient vector $\theta^\star$ satisfies $\sum_{j=1}^\infty \theta_j^{\star 2} j^{2\beta} \lesssim 1$, then it follows that 
\[ K(p_{\theta^\star}^n, p_{\theta_S^\star}^n) \lesssim n |S|^{-2\beta}. \]
So, if we take $\eps_n = n^{-\beta/(2\beta + 1)}$ and $|S_n^\star| = \lfloor n \eps_n^2 \rfloor = \lfloor n^{1/(2\beta + 1)} \rfloor$, then a candidate $\theta^\dagger$ in Condition~S2 is $\theta^\dagger = \theta_S^\star$.  That the desired bound on the Kullback--Leibler second moment $V$ also holds for this $\theta^\dagger$ follows similarly, as in \citet[][p.~558]{arbel.etal.sjs2013}.  This establishes Condition~S2 so the conclusion of the proposition follows from Theorem~\ref{thm:rate2}.

\subsection{Proof of Proposition~\ref{prop:mixture2}}

Write $\eps_n = (\log n)^t n^{-\beta / (2\beta + 1)}$ for a constant $t > 0$ to be determined.  For Condition~S2, we appeal to Lemma~4 in \citet{kruijer.rousseau.vaart.2010} which states that there exists a finite normal mixture, $p^\dagger$, having $S_n^\star$ components, with 
\[ S_n^\star \lesssim n^{1/(2\beta + 1)} (\log n)^{k-t} = n\eps_n^2 (\log n)^{k-3t}, \]
such that $\max\bigl\{ K(p^\star, p^\dagger), \, V(p^\star, p^\dagger) \bigr\} \leq \eps_n^2$, where $k=2/\tau_2$ and $\tau_2$ is related to the tails of $p^\star$ in their Condition~C3.  So, if $t$ is sufficiently large, then our Condition~S2 holds.   

For Condition~GP2, we first note that, by a straightforward modification of the argument given in the proof of Proposition~\ref{prop:mixture1}, we have 
\[ \int_{\Delta(S)\times \RR^S\times \RR_+} \bigl[ \E_{p^\star}\{\pi_{n,S}(\theta)^p\} \bigr]^{1/p} \,d\theta \leq e^{b S \log n} \Bigl( 1+ \frac{B}{\delta} \Bigr)^S \frac{B_u(1+\psi)-B_l(1-\psi)}{2\psi B_l},\]
for some $b>0$.
The logarithmic term appears in the first product because, as in the proof of Proposition~\ref{prop:mixture1}, the exponent can be bounded by a constant times $S \log(1 + c/S) \lesssim S \log n$ since $c/S = n^2 / S^2 < n^2$.  To get the upper bound in the above display to be exponential in $S$, we can take 
\[ \delta \gtrsim \frac{B}{n^b} \quad \text{and} \quad \psi \gtrsim \frac{B_u-B_l}{B_l}\frac{1}{e^{bS\log n}-(B_l+B_u)/(2B_l)}. \]
With these choices, it follows that the right-hand side in the previous display is upper bounded by $e^{3b\log n}$, independent of $S$.  Therefore, trivially, the summation in \eqref{eq:gp2.sum} is also upper bounded by $e^{3b\log n}$.  Since $\log n \leq n \eps_n^2$, we have that Condition~GP2 holds.  

Condition~LP2 has two parts to it.  For the first part, which concerns the prior concentration on $\L_n$, we can follow the argument in the proof of Proposition~\ref{prop:mixture1}.  In particular, with the additional prior on $\lambda$, the corresponding version of \eqref{eq:mixture.lr.bound} is
$$\E L_n(\theta_S)\geq \frac{\Gamma(c+S) \, c^n}{\Gamma(c+S+n)} e^{-\frac16 n \delta^2 \hat\lambda} \,e^{-nz\psi}L_n(\hat\theta_S)$$
for some $z \in (0,1)$.  This is based on the result that if $\lambda\sim \unif(\hat\lambda(1-\psi),\hat\lambda(1+\psi))$ then $\E\lambda=\hat\lambda$ and $\E\log\lambda>\log\hat\lambda-z\psi$ for some $z \in (0,1)$.  With $c = n^2 S^{-1}$ as proposed, the argument in the proof of Proposition~\ref{prop:histogram} shows that the first term on the right-hand side of the above display is lower-bounded by $e^{-CS}$ for some $C > 0$.  To make other other terms lower-bounded by something of the order $e^{-C'S}$, we need $\delta$ and $\psi$ to satisfy 
\[ \delta^2 \lesssim \frac{1}{B_u^2} \, \frac{S}{n} \quad \text{and} \quad \psi \lesssim \frac{S}{n}. \]
Given these constraints and those coming from checking Condition~GP2 above, we require
\[ \frac{B}{n^b} \lesssim \frac{1}{B_u} \Bigl( \frac{S}{n} \Bigr)^{1/2} \quad \text{and} \quad n^{bS}-\frac12 \Bigl(1+\frac{B_u}{B_l} \Bigr) \lesssim \frac{n \, B_u}{B_l}. \]
From Lemma~4 in \citet{kruijer.rousseau.vaart.2010}, we can deduce that the absolute value of the locations for $p^\dagger$ are smaller than a constant times $\log \eps_n^{-\beta}$.  Hence, we can take $B = (\log n)^2$. Also, we need $B_l \lesssim \eps_n^\beta$ which is met by taking $B_l = n^{-1}$.  To meet our constraints, we can take $B_u = n^{b-2}$, so we need $b\geq 2$.  These conditions on $(B, B_l, B_u, \delta, \psi)$ are met by the choices stated in the proposition.  For the second part of Condition~LP2, which concerns the concentration of $w_n$ around $S_n^\star$, we have 
\[ w_n(S_n^\star) \geq e^{-D (\log S_n^\star)^r S_n^\star} \gtrsim e^{-D n\eps_n^2 (\log n)^{k + r - 3t}}. \]
So, just like in \citet{kruijer.rousseau.vaart.2010}, as long as $3t > k + r$, we get $w_n(S_n^\star) \geq e^{-D n\eps_n^2}$ as required in Condition~LP2.

\ifthenelse{1=1}{
\bibliographystyle{apalike}
\bibliography{/Users/rgmarti3/Dropbox/Research/mybib}
}{

}

\end{document}